\theoremstyle{plain}
\newtheorem{theorem}{Theorem}[section]
\newtheorem{corollary}[theorem]{Corollary}
\newtheorem{lemma}[theorem]{Lemma}
\newtheorem{proposition}[theorem]{Proposition}
\theoremstyle{definition}
\newtheorem{definition}[theorem]{Definition}
\newtheorem{rem}[theorem]{Remark}
\DeclareMathOperator{\End}{End}
\DeclareMathOperator{\Hom}{Hom}
\DeclareMathOperator{\aut}{Aut}
\newcommand{\cal}[1]{\mathcal{#1}}
\newcommand{\bb}[1]{\mathbb{#1}}
\newcommand{\NS}{\mathrm{NS}}
\def\Z{{\mathbb{Z}}}
\def\Q{{\mathbb{Q}}}
\def\C{{\mathbb{C}}}
\def\ps@pprintTitle{%
  \let\@oddhead\@empty
  \let\@evenhead\@empty
  \let\@oddfoot\@empty
  \let\@evenfoot\@oddfoot
}
\title{Smooth quotients of principally polarized abelian varieties}
\author{Robert Auffarth}
\address{R. Auffarth \\Departamento de Matem\'aticas, Facultad de
Ciencias, Universidad de Chile\\ Las Palmeras 3425, \~Nu\~noa, Santiago, Chile}
\email{rfauffar@uchile.cl}
\author{Giancarlo Lucchini Arteche}
\address{G. Lucchini Arteche \\Departamento de Matem\'aticas, Facultad de
Ciencias, Universidad de Chile\\ Las Palmeras 3425, \~Nu\~noa, Santiago, Chile}
\email{luco@uchile.cl}
\thanks{The first author was partially supported by Fondecyt Grant 11180965. The second author was partially supported by Fondecyt Grant 11170016 and PAI Grant 79170034.}
\keywords{Abelian varieties, principal polarizations, Jacobians of curves, smooth quotients, automorphisms}
\begin{document}

\begin{abstract}
We give an explicit characterization of all principally polarized abelian varieties $(A,\Theta)$ such that there is a finite subgroup $G\subseteq\mathrm{Aut}(A,\Theta)$ such that the quotient variety $A/G$ is smooth. We also give a complete classification of smooth quotients of Jacobians of curves.\\

\noindent\textbf{MSC codes:} primary 14L30, 14K10; secondary 14H37, 14H40.
\end{abstract}

\maketitle

\section{Introduction}

Let $(A,\Theta)$ be a complex principally polarized abelian variety and denote by $\aut(A,\Theta)$ the group of automorphisms of $A$ that fix the origin as well as the numerical class of $\Theta$. The purpose of this article is to give an explicit character\-ization of all principally polarized abelian varieties admitting a smooth quotient, that is, varieties $(A,\Theta)$ such that there exists a non-trivial subgroup $G\subseteq\aut(A,\Theta)$ such that $A/G$ is a smooth variety. We recall the well-known fact that $\aut(A,\Theta)$ is a finite group (see \cite[Corollary 5.1.9]{BL}) and so we are only dealing with quotients by finite groups. 

This article can be seen as a continuation of the article \cite{ALA} by the same authors where abelian varieties that have a finite group of automorphisms with smooth quotient are characterized. The addition of a polarization to the study of smooth quotients throws a non-trivial extra ingredient into the mix and will allow us to study the problem in a moduli-theoretic context.

Our first main result (Theorem \ref{main thm}) states the following:

\begin{theorem}
Let $(A,\Theta)$ be a principally polarized abelian variety. Then there exists a non-trivial subgroup $G\subseteq\aut(A,\Theta)$ such that $A/G$ is smooth if and only if $(A,\Theta)$ is the product of principally polarized elliptic curves and a principally polarized abelian variety that comes from what we call the \textit{standard construction}.
\end{theorem}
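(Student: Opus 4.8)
The plan is to prove both directions of the equivalence, with the "if" direction being essentially a verification and the "only if" direction requiring the real work.

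**Setup and the easy direction.** First I would recall that for a finite group $G$ acting on a smooth variety $X$, the quotient $X/G$ is smooth at the image of a point $x$ if and only if the stabilizer $\stab_G(x)$ acts on the tangent space $T_xX$ as a group generated by pseudoreflections (Chevalley–Shephard–Todd). Applied to $A$ with $G\leq\aut(A,\Theta)$, and using that every $g\in G$ fixing the origin acts linearly on the universal cover $V=T_0A$, the quotient $A/G$ is smooth if and only if for every $x\in A$ the stabilizer $G_x$ acts on $V$ (after translating $x$ to the origin, which conjugates $G_x$ into a subgroup of $\mathrm{GL}(V)$) as a pseudoreflection group. So the whole problem is: classify the pairs $(A,\Theta)$ admitting such a $G$. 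For the "if" direction, I would simply check that a product of principally polarized elliptic curves (where $G$ can be taken to contain the sign involutions and coordinate permutations of isomorphic factors, all of which act by pseudoreflections or products thereof arranged so that every stabilizer is a reflection group — e.g. $G=(\Z/2)^n\rtimes S_n$-type groups acting with smooth quotient $\mathbb{P}^1\times\cdots$ or symmetric products) and the output of the "standard construction" both yield smooth quotients; this reduces to invoking the explicit description in Section \ref{standard} together with the criterion above, and the fact that smoothness of quotients is preserved under products.

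**The hard direction.** For "only if", suppose $G\leq\aut(A,\Theta)$ with $A/G$ smooth. The strategy I would follow: analyze the $G$-action on $V$ through the polarization. Since every $g\in G$ satisfies $g^*\Theta\equiv\Theta$, the induced action on $V$ preserves the Hermitian form $H$ associated to $\Theta$ (up to the lattice-compatibility constraints), so $G\hookrightarrow \mathrm{U}(V,H)$ is a finite unitary group, and moreover it preserves the lattice $\Lambda$ with $A=V/\Lambda$, on which the polarization restricts to a unimodular alternating form. The smoothness hypothesis forces, via Chevalley–Shephard–Todd applied at every point of $A$ (not just the origin), strong constraints: in particular every element of $G$ that fixes a point must be "reflection-like", and one gets a wealth of pseudoreflections in $G$. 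I would then import the classification machinery from the companion paper \cite{ALA}: there the authors characterize abelian varieties with a finite automorphism group having smooth quotient, presumably showing $A$ decomposes as a product of factors on which $G$ acts in a controlled "isoclinic" or reflection-group fashion. The new ingredient here is that $\Theta$ is principal, so the decomposition of $A$ must be compatible with $\Theta$: by a Poincaré-type reducibility argument for polarized abelian varieties, the $G$-invariant decomposition of $A$ refines to a decomposition of $(A,\Theta)$ as an orthogonal product of principally polarized pieces, each carrying its own group action with smooth quotient. Running the classification factor-by-factor, the one-dimensional pieces are precisely the elliptic-curve factors, and the higher-dimensional pieces are forced — by the unimodularity of the alternating form together with the pseudoreflection constraint — to be exactly those arising from the standard construction.

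**Main obstacle.** The principal difficulty I anticipate is the interaction between the lattice-level structure (the polarization is a unimodular alternating form on $\Lambda$, and $G$ must preserve both the lattice and the complex structure) and the reflection-group condition on $V$: a finite unitary reflection group does not automatically preserve a unimodular lattice compatible with a complex structure, so one must carefully determine which reflection groups can occur, and then which of the resulting polarized abelian varieties are principally polarized. Concretely, I expect the bulk of the argument to be: (1) show that the $G$-stabilizers forcing smoothness, together with $\Theta$ being principal, pin down the local structure to a product of complex reflection representations of a very restricted type; (2) check that requiring a $G$-stable unimodular alternating form eliminates all cases except elliptic products and the standard construction; and (3) glue the local/factor-wise data back into a global statement about $(A,\Theta)$. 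Step (2) is where the "non-trivial extra ingredient" the introduction alludes to really bites, and I would expect to spend most of the proof there, likely organized as a case analysis over the Shephard–Todd list of irreducible complex reflection groups, discarding each that fails to admit the requisite unimodular polarization.
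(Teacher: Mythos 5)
There is a genuine gap at the central step of your ``only if'' direction. You assert that the $G$-invariant decomposition of $A$ ``refines to a decomposition of $(A,\Theta)$ as an orthogonal product of principally polarized pieces, each carrying its own group action with smooth quotient,'' and then propose to classify factor by factor. That claim is false, and it is exactly where the content of the theorem lies: the standard construction produces principally polarized $(A,\Theta)$ with \emph{irreducible} theta divisor (Theorem \ref{thm irred of Theta}) --- for instance Jacobians of genus-$3$ \'etale double covers of genus-$2$ curves --- so $(A,\Theta)$ does not split into principally polarized $G$-stable factors. Your plan, carried out as written, would conclude that every such $(A,\Theta)$ is a product of principally polarized pieces and would therefore miss the standard construction entirely. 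Moreover, your factor-wise analysis never accounts for the fixed part $Y=(A^G)^0$, on which $G$ acts trivially and which is an essentially arbitrary polarized abelian variety of prescribed type; it cannot be recovered from any reflection-group analysis.

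What is actually needed, and what the paper does, is the following. One takes $Y=(A^G)^0$ and its complementary abelian subvariety $X$ with respect to $\Theta$; the restricted polarizations $\Xi_X$, $\Xi_Y$ are in general \emph{not} principal, and $(A,\Theta)$ is recovered from them as $(X\times Y)/\Gamma$ where $\Gamma$ is the graph of an antisymplectic isomorphism $K(\Xi_X)\to K(\Xi_Y)$ (Debarre, \cite[Prop.~9.1]{Debarre}). Since $G$ acts trivially on $\Gamma$, it acts trivially on $K(\Xi_X)$, and this is the decisive constraint: on each irreducible factor $X_i$ of $X$ one has $\NS(X_i)^{G_i}\cong\Z$, so $\Xi_{X_i}$ is $m$ times the primitive generator, and $G_i$-invariance of $K(\Xi_{X_i})$ forces $m=1$, pins the type to $(1,\ldots,1)$ in Example \ref{ex1} (so those factors are principally polarized and split off) and to $(1,\ldots,1,g_i+1)$ in Example \ref{ex2} (so those factors must be glued to $Y$, which is precisely the standard construction), and rules out Example \ref{ex3} altogether. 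Your anticipated case analysis over the Shephard--Todd list is not where the difficulty sits: the admissible irreducible pairs $(X_i,G_i)$ are already classified in \cite{ALA} into three families, and the genuinely new work is this kernel/type analysis of the non-principal restricted polarizations together with the descent along $\Gamma$.
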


The basic blocks of our standard construction (see Section \ref{standard} for details) are self-products of an elliptic curve with a natural action of the symmetric group $S_{g+1}$ that comes from its standard representation, and are equipped with a non-principal $S_{g+1}$-invariant polarization $\Xi_g$. The standard construction takes a product of such varieties, as well as an auxiliary polarized abelian variety, and yields a principally polarized abelian variety that is isogenous to the whole product and admits a smooth quotient.

We are able to give a moduli-theoretic interpretation of the situation as well (see Section \ref{sec moduli}). For $g_1,\ldots,g_r,n\in\mathbb{Z}_{\geq0}$ we define a morphism
\[\Phi_{g_1,\ldots,g_s,n}:\left(\prod_{j=1}^s\cal X(g_i+1)\right)\times\mathcal{A}_n^D\to\mathcal{A}_g,\]
where $g=n+z+\sum_{i=1}^sg_i$, $z$ is the number of $g_i$'s equal to 0, $\mathcal{X}(m)$ is the modular curve $\mathbb{H}/\Gamma(m)$ and $\mathcal{A}_n^D$ is the moduli space of polarized abelian varieties of dimension $n$ and type $D$ along with some extra data (where $D$ depends on the $g_i$), whose image consists of principally polarized abelian varieties admitting a smooth quotient. Theorem \ref{moduli} rephrases our main theorem by stating that a principally polarized abelian variety admits a smooth quotient if and only if it lies in the image of one of these morphisms. When $g_i>0$ for all $i$, the image of $\Phi_{g_1,\ldots,g_r,n}$ corresponds to varieties that come from the standard construction described above, and we prove (Proposition \ref{prop standard irred}) that for a very general element in the image of this morphism, its theta divisor is irreducible.\\

Since Jacobians are a special case of irreducible principally polarized abelian varieties, it is then natural to ask what Jacobians have smooth quotients. With a little more work we are able to classify all smooth quotients of Jacobians by groups of automorphisms that come from automorphisms of the curve in question. The classification we obtain for Jacobians is the following: 

\begin{theorem}\label{thm jacob intro}
Let $X$ be a smooth projective curve of genus $g$ and let $G$ be a (non-trivial) group of automorphisms of $X$. Then $J_X/G$ is smooth if and only if one of the following holds:
\begin{enumerate}
\item $g\leq 1$;
\item $g=2$, $G\cong\mathbb{Z}/2\Z$  and $X\to X/G$ ramifies at two points.
\item $g=3$, $G\cong\mathbb{Z}/2\Z$  and $X\to X/G$ is \'etale.
\end{enumerate}
\end{theorem}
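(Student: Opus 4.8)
The plan is to prove both nontrivial implications using Theorem~\ref{main thm} together with two elementary principles. First, Chevalley--Shephard--Todd: a finite subgroup $\Gamma\subseteq GL(V)$ has $V/\Gamma$ smooth if and only if $\Gamma$ is generated by pseudo-reflections. Second, if $\phi$ is a group endomorphism of an abelian variety $A$ with $\phi(p)=p$, then $d\phi_p$ is conjugate to $d\phi_0$ (because $\phi(p+v)=p+\phi(v)$), so the analytic local model of $A/\langle\phi\rangle$ at the image of any $\phi$-fixed point is the linear quotient $T_0A/\langle d\phi_0\rangle$. With these in hand, $(\Leftarrow)$ is short: for $g\le 1$ there is nothing to prove, and in cases (2) and (3) one writes $G=\langle\sigma\rangle$ and notes that pullback along $X\to X/\langle\sigma\rangle$ identifies $H^0(\omega_{X/\langle\sigma\rangle})$ with $H^0(X,\omega_X)^{\langle\sigma\rangle}$, so $g(X/\langle\sigma\rangle)=\dim H^0(\omega_X)^{\langle\sigma\rangle}=g-1$; hence $\sigma^*$ acts on $T_0J_X\cong\C^g$ as a pseudo-reflection (in particular $\sigma^*\ne\mathrm{id}$, so $G\cong\Z/2\Z$ inside $\aut(J_X,\Theta)$), and by the linearization principle every stabilizer of $\langle\sigma^*\rangle$ on $J_X$ is trivial or generated by a pseudo-reflection, whence $J_X/G$ is smooth.

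For $(\Rightarrow)$, assume $g\ge 2$, so that $\aut(X)\hookrightarrow\aut(J_X,\Theta)$ by the Torelli theorem and $G$ acts faithfully on $(J_X,\Theta)$. Since the origin is $G$-fixed and $J_X/G$ is smooth, Chevalley--Shephard--Todd forces the image of $G$ in $GL(T_0J_X)\cong GL_g(\C)$ to be a complex reflection group, so $G$ is generated by pseudo-reflections. If $\sigma\in G$ is a pseudo-reflection, then $\sigma$ is also a pseudo-reflection on the dual space $H^0(\omega_X)$, so $g(X/\langle\sigma\rangle)=\dim H^0(\omega_X)^{\langle\sigma\rangle}=g-1$; applying Riemann--Hurwitz to the cyclic cover $X\to X/\langle\sigma\rangle$ of degree $n=\mathrm{ord}(\sigma)$ gives $2g-2=n(2g-4)+R$ with $R\ge 0$, i.e.\ $R=(2g-2)-n(2g-4)$. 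For $g\ge 4$ this is negative already when $n=2$, so $G$ would then have no pseudo-reflection and be trivial, contradicting the hypothesis. For $g=3$ one gets $R=4-2n\ge 0$, hence $n=2$ and $R=0$: an \'etale involution with quotient of genus $2$. For $g=2$ one gets $R=2$ for all $n$; since $X/\langle\sigma\rangle$ is then elliptic, the local monodromies of the $\Z/n\Z$-cover around its branch points must sum to $0$ in $\Z/n\Z$, and checking the finitely many ramification profiles with $R=2$ leaves only $n=2$ with exactly two simple branch points. Thus $g\in\{2,3\}$ and $G$ is a finite real reflection group of rank $\le 3$ generated by such involutions.

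It remains to show $G\cong\Z/2\Z$, which I expect to be the main obstacle. Since $X$ is smooth and irreducible, $\Theta=W_{g-1}$ is irreducible, so $(J_X,\Theta)$ is an indecomposable principally polarized abelian variety; by Theorem~\ref{main thm} it has no elliptic factors and must itself arise from the standard construction of Section~\ref{standard}. One then checks that, in dimensions $2$ and $3$, such a principally polarized abelian variety whose automorphism group is generated by the restricted involutions found above can only have group $\Z/2\Z$. Equivalently, and more directly: any strictly larger reflection group $G$ contains an element acting on $T_0J_X$ with fixed locus of codimension $\ge 2$, and such an element fixes a point of $J_X$ whose $G$-stabilizer is cyclic and not generated by pseudo-reflections --- this last point is where indecomposability of $(J_X,\Theta)$ is used, to prevent the various fixed loci from coinciding --- contradicting the smoothness of $J_X/G$ by Chevalley--Shephard--Todd. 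Once $G\cong\Z/2\Z$ is known, the previous paragraph already identifies the generator and the ramification, giving exactly cases (1)--(3).
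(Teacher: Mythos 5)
Your reduction to $g\le 3$ via Chevalley--Shephard--Todd and Riemann--Hurwitz, your identification of the ramification data in genus $2$ and $3$, and your linearization argument for the converse (using that an automorphism fixing $p$ acts linearly in the chart centered at $p$) are all sound and essentially parallel to the paper, which handles the converse instead via Proposition~\ref{prop And smooth}. The problem is the last step, which you yourself flag as the main obstacle: ruling out $G\not\cong\Z/2\Z$. Your proposed mechanism --- that any strictly larger reflection group must contain an element $\tau$ with $\mathrm{codim}\,\mathrm{Fix}(d\tau)\ge 2$ fixing a point of $J_X$ whose stabilizer is not generated by pseudo-reflections --- cannot work. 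The standard construction of Section~\ref{standard} produces genuine three-dimensional principally polarized abelian varieties $A=(X\times Y)/\Gamma$ with $X$ as in Example~\ref{ex2}, $G\cong S_3$ or $(\Z/2\Z)^2$, and $A/G$ \emph{smooth} (this is exactly Proposition~\ref{prop And smooth}); moreover, by Theorem~\ref{thm irred of Theta} a very general such $A$ has irreducible theta divisor, hence is indecomposable. So for these $A$ every stabilizer \emph{is} generated by pseudo-reflections, and neither smoothness of the quotient, nor irreducibility of $\Theta$, nor indecomposability can produce the contradiction you want. The exclusion of $S_3$ and $(\Z/2\Z)^2$ in the case $(g,g')=(3,1)$ must use that the action comes from $\aut(X)$, not just from $\aut(J_X,\Theta)$.

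Concretely, the paper's argument at this point is: since $X\to X/G=E$ is a Galois cover of an elliptic curve, it is \emph{minimal} (Lemma~\ref{lem minimality}, which uses that $G$ acts trivially on $f^*E\subset J_X^G$). For $G\cong S_3$, the intermediate quotient $X/A_3$ must then have genus $2$ (genus $1$ would violate minimality), and a degree-$3$ Galois cover from genus $3$ to genus $2$ contradicts Riemann--Hurwitz. For $G\cong(\Z/2\Z)^2$, minimality together with \cite[Cor.~12.1.4]{BL} gives $|f^*E\cap P|=|G|^2=16$ for the complementary (Prym) surface $P$, while $f^*E\cap P\subset (f^*E)[2]$ has order at most $4$. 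Your proof also leaves the cases $g'=0$ (i.e.\ $X/G\cong\mathbb{P}^1$) to the same vague final paragraph; these are easy to kill --- the standard construction forces $\dim(J_X^G)\ge\dim Y\ge 1$, whereas $g'=0$ gives $\dim(J_X^G)=0$ --- but that argument should be made explicit rather than folded into ``one then checks.''
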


Throughout this paper, a \textit{polarization} will be used interchangeably as an ample divisor, an ample line bundle, or an ample numerical class. This should not produce confusion. In the case of a principal polarization, we will say that it is \textit{irreducible} if any effective divisor inducing the polarization is irreducible. We will denote numerical equivalence by $\equiv$.

\subsection*{Acknowledgements} We would like to thank an anonymous referee for an argument that shortened our original proof of Theorem \ref{thm jacob intro}, as well as a second anonymous referee for his or her thorough reading of our article and his or her comments.

\section{Preliminary results: smooth quotients of abelian varieties}\label{sec prelim}

The following results are consequences of the main results of \cite{ALA} and we will be using them in what follows.

\begin{theorem}\label{thm classification}
Let $A$ be an abelian variety of dimension $g$, and let $G$ be a (non-trivial) finite group of automorphisms of $A$ that fix the origin. Then the following conditions are equivalent:
\begin{itemize}
\item[(1)] $A/G$ is smooth and the analytic representation of $G$ is irreducible.
\item[(2)] $A/G$ is smooth of Picard number 1.
\item[(3)] $A/G\cong\mathbb{P}^g$.
\item[(4)] There exists an elliptic curve $E$ such that $A\cong E^g$ (as a variety, not including the polarization) and $(A,G)$ satisfies exactly one of the following:
\begin{enumerate}[label=(\alph*)]
\item $G\cong C^g\rtimes S_g$ where $C$ is a (cyclic) subgroup of automorphisms of $E$ of order $\geq 2$ that fix the origin; here the action of $C^g$ is coordinatewise and $S_g$ permutes the coordinates.\label{ex1}
\item $G\cong S_{g+1}$ and acts on 
\[A\cong\{(x_1,\ldots,x_{g+1})\in E^{g+1}:x_1+\cdots+x_{g+1}=0\},\]
by permutations.\label{ex2}
\item $g=2$, $E=\C/\Z[i]$ and $G$ is the order 16 subgroup of $\mathrm{GL}_2(\Z[i])$ generated by:
\[\left\{\begin{pmatrix} -1 & 1+i \\ 0 & 1\end{pmatrix}\right.,\, \begin{pmatrix} -i & i-1 \\ 0 & i\end{pmatrix},\, \left.\begin{pmatrix} -1 & 0 \\ i-1 & 1\end{pmatrix} \right\},\]
acting on $A$ in the obvious way.\label{ex3}
\end{enumerate}
\end{itemize}
\end{theorem}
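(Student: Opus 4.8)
The plan is to derive all four equivalences from the classification of smooth quotients of abelian varieties established in \cite{ALA}. That classification says that, for a finite group $G$ of automorphisms of an abelian variety $A$ fixing the origin, $A/G$ is smooth precisely when $A$ splits $G$-equivariantly as $A\cong B\times\prod_{i=1}^{r}E_i^{n_i}$ with $G$ acting trivially on $B$ and acting on each factor $E_i^{n_i}$ as in one of the three families \ref{ex1}--\ref{ex3}, and that in this situation $A/G\cong B\times\prod_{i=1}^{r}\mathbb{P}^{n_i}$. Granting this structure theorem, each implication reduces to a short bookkeeping argument.

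I would begin with $(4)\Rightarrow(3)$ and $(4)\Rightarrow(1)$, treated case by case. For \ref{ex1} one quotients $E^g$ first by the coordinatewise action of $C^g$, obtaining $(E/C)^g\cong(\mathbb{P}^1)^g$, and then by the residual coordinate-permutation action of $S_g$, whose quotient is $\operatorname{Sym}^g\mathbb{P}^1\cong\mathbb{P}^g$. For \ref{ex2}, $\operatorname{Sym}^{g+1}E$ is a $\mathbb{P}^g$-bundle over $E$ via the Abel--Jacobi summation map, and $A/S_{g+1}$ is exactly the fibre over $0$, hence $\cong\mathbb{P}^g$. For \ref{ex3} one checks directly that the three listed matrices generate an order-$16$ complex reflection subgroup of $\operatorname{GL}_2(\C)$ and that the corresponding quotient of $E^2$ is $\mathbb{P}^2$. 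In all three cases the irreducibility of the analytic representation is classical: the restriction of the permutation representation of $S_{g+1}$ to the hyperplane $\{\sum x_i=0\}$ is irreducible, the imprimitive representation of $C\wr S_g$ on $\C^g$ is irreducible whenever $C\neq\{1\}$, and the order-$16$ representation on $\C^2$ is irreducible.

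The implication $(3)\Rightarrow(2)$ is immediate because $\operatorname{Pic}(\mathbb{P}^g)\cong\Z$. For $(1)\Rightarrow(4)$ and $(2)\Rightarrow(4)$ I would feed the hypothesis into the structure theorem and write $A\cong B\times\prod_{i=1}^{r}E_i^{n_i}$ as above; since $G$ is non-trivial and $g\geq1$, the trivial factor $B$ cannot be all of $A$, so $r\geq1$. The tangent space $T_0A$ contains $T_0B$ (a trivial subrepresentation) and each $T_0(E_i^{n_i})$ as subrepresentations, so if $\rho_a$ is irreducible we must have $B=0$ and $r=1$, i.e.\ $A\cong E^g$ with $G$ of type \ref{ex1}--\ref{ex3}; this proves $(1)\Rightarrow(4)$. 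For $(2)\Rightarrow(4)$, the Künneth decomposition $H^2(B\times\prod_i\mathbb{P}^{n_i})\cong H^2(B)\oplus\bigoplus_i H^2(\mathbb{P}^{n_i})$, valid since each $\mathbb{P}^{n_i}$ has vanishing $H^1$, respects the Hodge structure, so the Picard number of $A/G$ equals $\rho(B)+r$; as $\rho(B)\geq1$ when $B\neq0$, Picard number $1$ again forces $B=0$ and $r=1$. Assembling these, $(1)\Rightarrow(4)\Rightarrow(3)\Rightarrow(2)\Rightarrow(4)\Rightarrow(1)$, so all four statements are equivalent.

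The genuinely hard input is the structure theorem of \cite{ALA}, which we use as a black box: it ultimately rests on the Chevalley--Shephard--Todd theorem (smoothness of $A/G$ forces the image of every point stabilizer in the corresponding tangent space to be a complex reflection group) together with a careful analysis of how the Shephard--Todd classification interacts with the integral lattice of $A$ and with the stabilizers of the non-zero fixed points. Within the present argument the only delicate computations are the exceptional case \ref{ex3} --- verifying by hand that those matrices generate a reflection group with $E^2/G\cong\mathbb{P}^2$ --- and confirming that the Picard-number count $\rho(A/G)=\rho(B)+r$ remains valid when the projective-space factors sit alongside a non-trivial abelian part $B$.
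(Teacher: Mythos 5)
The paper does not actually prove this theorem: it is stated in Section \ref{sec prelim} as a direct import from \cite{ALA}, so there is no internal argument to compare yours against. Your strategy --- take the classification of \cite{ALA} as a black box and deduce the four equivalences by bookkeeping --- is therefore the same in spirit, and your verifications of $(4)\Rightarrow(3)$, $(4)\Rightarrow(1)$ and $(3)\Rightarrow(2)$ are fine. But be aware that the genuinely hard direction, namely that a smooth quotient with irreducible analytic representation must be one of \ref{ex1}, \ref{ex2}, \ref{ex3} over a single elliptic curve, is entirely contained in your black box; your write-up does not reprove any of it, so as a self-contained proof it has the same status as the paper's citation.

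There is, however, one concrete inaccuracy: the structure theorem of \cite{ALA} does \emph{not} say that $A$ splits $G$-equivariantly as a direct product $B\times\prod_i E_i^{n_i}$ with $G$ trivial on $B$. The correct statement (Proposition \ref{prop And smooth}) is that the addition map $A_0\times P_G\to A$ is only an isogeny, and $A/G$ merely \emph{fibers} over $A_0/(A_0\cap P_G)$ with fibers $P_G/G$; the failure of this isogeny to be an isomorphism is exactly what the paper's standard construction is built to capture. For the present theorem the error is harmless, because each of (1), (2), (3) forces $\dim A^G=0$ (for (1), faithfulness of $\rho_a$ plus irreducibility kills the fixed subspace; then Theorem \ref{thm red irred} applies and does give a genuine product), but your K\"unneth computation $\rho(A/G)=\rho(B)+r$ is carried out on a product that need not exist. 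It should be replaced by the fibration argument: if $A_0\neq 0$, the pullback of an ample class from the positive-dimensional base $A_0/(A_0\cap P_G)$ together with an ample class on $A/G$ already forces $\rho(A/G)\geq 2$, after which Theorem \ref{thm red irred} and $\rho\geq r$ give $r=1$ as you intend. You should also note, for the ``exactly one'' clause of (4), that the three families are mutually exclusive, which you do not address.
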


To see the proof of this theorem see \cite[Theorem 1.1]{ALA} and \cite[Theorem 1.1]{ALAQ}. We will refer to the three cases appearing in point (4) of this theorem as Example \ref{ex1}, Example \ref{ex2} and Example \ref{ex3} respectively.

The next two results are proved in \cite[Theorem 1.3]{ALA} and \cite[Prop. 2.9]{ALA}.

\begin{theorem}\label{thm red irred}
Let $A$ be an abelian variety of dimension $g$, and let $G$ be a (non-trivial) finite group of automorphisms of $A$ that fix the origin. Assume that $A/G$ is smooth and $\dim(A^G)=0$. Then $G=\prod_{i=1}^rG_i$, $A=\prod_{i=1}^rA_i$, $G_i$ acts trivially on $A_j$ for $i\neq j$ and irreducibly on $A_i$ and $A_i/G_i$ is smooth for all $1\leq i\leq r$. In particular,
\[A/G\cong A_1/G_1\times\cdots\times A_r/G_r.\]
\end{theorem}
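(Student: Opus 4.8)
The plan is to reduce the statement to the structure theory of complex reflection groups together with the main classification of smooth quotients of abelian varieties from \cite{ALA}. The real content is the product decomposition $A=\prod_i A_i$, $G=\prod_i G_i$: once that is available, each $A_i/G_i$ is a factor of the smooth variety $A/G\cong\prod_j A_j/G_j$ and is therefore smooth (if a product of varieties is smooth at a point, each factor is smooth at the corresponding point, by additivity of tangent-space dimensions), and the displayed isomorphism is then immediate. So I would concentrate entirely on producing the decomposition.

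First I would analyse the action on the tangent space at the origin. Since $0$ is fixed by all of $G$ and $A/G$ is smooth at its image, the Chevalley--Shephard--Todd theorem forces the image of $G$ in $\mathrm{GL}(T_0A)$ to be a finite complex reflection group; and since the analytic representation is faithful on automorphisms fixing the origin, $G$ itself acts on $V:=T_0A$ as a complex reflection group. The hypothesis $\dim A^G=0$ says exactly that $V^G=(T_0A)^G=0$, since this is the tangent space at the origin of the fixed subvariety $A^G$. For a finite complex reflection group with no nonzero fixed vectors there is a canonical decomposition $V=V_1\oplus\cdots\oplus V_r$ into irreducible (pairwise non-isomorphic) reflection subrepresentations, together with a matching direct product decomposition $G=G_1\times\cdots\times G_r$ in which $G_i$ acts on $V_i$ as an irreducible reflection group and trivially on $V_j$ for $j\ne i$: concretely, each reflection of $G$ has a one-dimensional moving space, hence acts nontrivially on exactly one $V_i$, and grouping the reflections accordingly and passing to generated subgroups produces the factors.

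Next I would descend this to $A$ itself, i.e.\ produce $G$-stable abelian subvarieties $A_i\subseteq A$ with $T_0A_i=V_i$ and $A=A_1\times\cdots\times A_r$ internally. Decomposing $H_1(A,\mathbb Q)$ as a $\mathbb Q[G]$-module and using that the Galois action on the $\mathbb C$-irreducibles of $G=\prod_i G_i$ preserves the ``position'' index, one obtains $H_1(A,\mathbb Q)=\bigoplus_i H^{(i)}$ with $H^{(i)}\otimes\mathbb C=V_i\oplus\overline{V_i}$, and hence at least an isogeny $A\sim\prod_i A_i$ realising the desired tangent spaces and $G$-actions. Upgrading this to an honest product requires that the lattice $H_1(A,\mathbb Z)$ itself split, and here smoothness of $A/G$ at \emph{all} fixed points (equivalently, that $\mathrm{Stab}(x)$ acts on $T_xA$ as a reflection group for every $x\in A$) must be used, not merely smoothness at the origin: already $A=\mathbb C^2/(\mathbb Z[i]^2+\mathbb Z(\tfrac12,\tfrac i2))$ with $(\mathbb Z/2)^2$ acting diagonally has $G$ acting as a reflection group on $T_0A$ but an unsplittable $G$-invariant lattice (and indeed its quotient is not smooth, the obstruction living at the extra $2$-torsion). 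This is precisely the analysis carried out in \cite{ALA}, so in practice I would invoke its main theorem directly: it yields $A\cong B\times\prod_i A_i$ and $G\cong\prod_i G_i$ with $G$ acting trivially on $B$, with $G_i$ acting irreducibly on $A_i$ and trivially on $A_j$ for $j\ne i$, and with each $A_i/G_i$ smooth. The hypothesis $\dim A^G=0$ forces $B=0$, and the statement follows.

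The main obstacle, then, is exactly the passage from the tangent-space (and rational) decomposition to an honest product decomposition of the abelian variety: without the full strength of \cite{ALA} — whose arguments control the stabilizers of all torsion points, not just of the origin — one only obtains the decomposition up to isogeny. Everything else is formal: the reflection-group structure theory, the observation that $\dim A^G=0$ removes the trivially-acted factor, the smoothness of the individual $A_i/G_i$, and the resulting product formula for $A/G$ (and, if desired, the identification of each $(A_i,G_i)$ with one of the Examples via Theorem \ref{thm classification}).
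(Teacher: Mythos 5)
The paper gives no proof of this statement: it is quoted verbatim, together with Theorem \ref{thm classification} and Proposition \ref{prop And smooth}, as a consequence of the main results of \cite{ALA}. Your proposal correctly locates the one genuinely hard step (upgrading the tangent-space and isogeny decompositions to an integral product decomposition of $A$, which needs smoothness at all fixed points, not just at the origin) and resolves it by invoking \cite{ALA} --- which is exactly what the paper does --- while the surrounding Chevalley--Shephard--Todd and reflection-group reductions you supply are sound.
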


\begin{proposition}\label{prop And smooth}
Let $A$ be an abelian variety of dimension $g$, and let $G$ be a (non-trivial) finite group of automorphisms of $A$ that fix the origin. Let $A_0$ be the connected component of $A^G$ containing 0 and let $P_G$ be its complementary abelian subvariety with respect to a $G$-invariant polarization. Then there exists a fibration $A/G\to A_0/(A_0\cap P_G)$ with fibers isomorphic to $P_G/G$. Moreover, $A/G$ is smooth if and only if $P_G /G$ is smooth.
\end{proposition}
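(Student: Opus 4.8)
The plan is to realize $A$ as a quotient of $A_0\times P_G$, transport the $G$-action there (where it becomes a product action, trivial on the first factor), and then trivialize the fibration by a finite \'etale base change.

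\textbf{Setting up the product picture.} Since $A_0$ and $P_G$ are complementary abelian subvarieties with respect to a $G$-invariant polarization, the addition map $\mu\colon A_0\times P_G\to A$ is an isogeny whose kernel is the finite group $N:=A_0\cap P_G$, embedded anti-diagonally via $a\mapsto(a,-a)$; thus $A\cong(A_0\times P_G)/N$ with $N$ acting by $a\cdot(x,y)=(x+a,y-a)$. Because $A_0=(A^G)^\circ$ is $G$-stable and the polarization is $G$-invariant, its complementary abelian subvariety $P_G$ is $G$-stable as well (the associated symmetric idempotent in $\End(A)\otimes\Q$ commutes with $G$; cf. \cite[Ch.~5]{BL}). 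Hence $G$ acts on $A_0\times P_G$ by $g\cdot(x,y)=(x,gy)$, trivially on the first factor. Since $N\subseteq A_0$ is fixed pointwise by $G$, this $G$-action commutes with the $N$-action, so it descends to the given action on $A$ and $A/G\cong(A_0\times P_G)/(N\times G)$.

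\textbf{The fibration and its fibers.} First projection $A_0\times P_G\to A_0$ followed by the isogeny $A_0\to A_0/N$ is $(N\times G)$-invariant, hence induces a morphism $f\colon A/G\to A_0/(A_0\cap P_G)$. Using $A_0+P_G=A$ we have $A_0/(A_0\cap P_G)\cong A/P_G$, and $f$ is the composite of $A\to A/P_G$ with the $G$-quotient. The subgroup $G$ acts trivially on $A/P_G\cong A_0/N$ (being trivial on $A_0$), so it preserves each fiber $x_0+P_G$ of $A\to A/P_G$; choosing $x_0\in A_0$ (possible since $A_0\to A/P_G$ is surjective), the action is $g(x_0+y)=x_0+gy$, which translation by $x_0$ identifies with the linear $G$-action on $P_G$. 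Therefore the fiber of $f$ over the class of $x_0$ is $(x_0+P_G)/G\cong P_G/G$.

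\textbf{\'Etale-local triviality and smoothness.} To prove the smoothness equivalence I would trivialize $f$ after the finite \'etale surjection $p\colon A_0\to A_0/N$. A direct computation with $A=(A_0\times P_G)/N$ produces a $G$-equivariant isomorphism $A\times_{A_0/N}A_0\cong A_0\times P_G$ over $A_0$ (sending $(x',z)\in A_0\times P_G$ to $((x',z)\bmod N,\,x')$), with $G$ acting trivially on $A_0$ and naturally on $P_G$; quotienting by $G$ gives $(A/G)\times_{A_0/N}A_0\cong A_0\times(P_G/G)$. In particular $f$ is \'etale-locally a trivial bundle with fiber $P_G/G$. Finally, since $p$ is finite \'etale and surjective, so is the projection $A_0\times(P_G/G)\to A/G$; hence $A/G$ is smooth if and only if $A_0\times(P_G/G)$ is, if and only if $P_G/G$ is (as $A_0$ is a nonempty smooth variety).

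\textbf{Main obstacle.} The delicate point is the bookkeeping in the first step: checking that the $G$- and $N$-actions on $A_0\times P_G$ genuinely commute — so that $A/G$ really is the quotient by $N\times G$ — and that $P_G$ is $G$-stable, which is exactly where the $G$-invariance of the polarization enters. Once these are pinned down, the fibration and the descent of smoothness are formal, modulo the short explicit verification of the fiber-product identification $A\times_{A_0/N}A_0\cong A_0\times P_G$, which must be stated over the correct base so that finite \'etale descent of smoothness applies.
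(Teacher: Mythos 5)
Your proposal is correct, and it coincides in essence with the intended argument: the paper states this proposition without proof as a consequence of \cite{ALA}, where the fibration is obtained exactly as you do, by writing $A\cong (A_0\times P_G)/N$ with $N=A_0\cap P_G$ anti-diagonal, observing that the $G$-action (trivial on $A_0$, stabilizing $P_G$ by $G$-invariance of the polarization) commutes with the $N$-action, and trivializing the resulting map $A/G\to A_0/N$ after the finite \'etale base change $A_0\to A_0/N$. Your verification of the key points — that $P_G$ is $G$-stable via the symmetric idempotent, that $N\subseteq A^G$ makes the two actions commute, and that smoothness ascends and descends along the finite \'etale surjection $A_0\times(P_G/G)\to A/G$ — is accurate and complete.
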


\section{Smooth quotients of principally polarized abelian varieties}

In this section, we give a full classification of smooth quotients of principally polarized abelian varieties.

We start with the case where $\dim(A^G)=0$, which is a direct application of Theorems \ref{thm classification} and \ref{thm red irred}. Here we only get direct products of copies of Example \ref{ex1}. Next, we present our standard construction, which uses copies of Example \ref{ex2} in order to obtain new smooth quotients of principally polarized abelian varieties. Finally, we show that in general, every principally polarized abelian variety admitting a smooth quotient is obtained as a direct product of these two cases. We conclude with a moduli-theoretic version of these results.

\subsection{The case $\dim(A^G)=0$}\label{sec dim A^G 0}
Let $E$ be an elliptic curve, and on $E^g$ consider the natural principal polarization
\begin{equation}\label{eqn Theta0}
\Theta_{g}:=[0]\boxtimes\cdots\boxtimes[0],
\end{equation}
where $[0]$ denotes the divisor on $E$ consisting of the origin. Note that in particular $\Theta_1=[0]$.

\begin{proposition}\label{thm ppav irred}
Let $(A,\Theta)$ be a principally polarized abelian variety of dimension $g$ and let $G\subseteq \aut(A,\Theta)$ be a (non-trivial) group of automorphisms of $A$. Assume that $A/G$ is smooth and that the analytic representation of $G$ is irreducible. Then $(A,G)$ is as in Example \ref{ex1} and $\Theta\equiv\Theta_{g}$.
\end{proposition}

\begin{proof}
By Theorem \ref{thm classification}, the pair $(A,G)$ must be as in one of the Examples \ref{ex1}, \ref{ex2}, \ref{ex3}. In particular, $A/G\cong\bb P^g$ and thus $\text{NS}(A)^G=\Z\Theta$. Therefore, if we denote by $\pi$ the morphism $A\to\bb P^g$, there exists $m\in\mathbb{Z}$ such that $\pi^*\mathcal{O}_{\mathbb{P}^g}(1)\equiv m\Theta.$ Then, by taking the self-intersection, we obtain $|G|=m^gg!$.

If $g>1$, this is only possible if $G\cong C^g\rtimes S_g$ with $m=|C|$ and therefore the pair $(A,G)$ is as in Example \ref{ex1}. If $g=1$ then this is trivially true, since in this case Example \ref{ex1} coincides with Example \ref{ex2}. Now, a direct computation in Example \ref{ex1} tells us that $\pi^*\mathcal{O}_{\mathbb{P}^g}(1)$ is indeed $m([0]\boxtimes\cdots\boxtimes[0])$. Hence $\Theta\equiv\Theta_{g}$.
\end{proof}

\begin{corollary}\label{thm ppav inv of dim 0}
Let $(A,\Theta)$ be a principally polarized abelian variety of dimension $g$ and let $G\leq \aut(A,\Theta)$ be a (non-trivial) group of automorphisms of $A$. Assume that $A/G$ is smooth and $\dim(A^G)=0$. Then the triple $(A,\Theta,G)$ is a direct product of triples as in Proposition \ref{thm ppav irred}.
\end{corollary}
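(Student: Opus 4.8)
\textbf{Proof plan for Corollary \ref{thm ppav inv of dim 0}.}
The strategy is to feed the hypotheses into Theorem \ref{thm red irred} to obtain a decomposition of the pair $(A,G)$, and then to check that this decomposition is compatible with the principal polarization $\Theta$. Concretely, since $\dim(A^G)=0$ and $A/G$ is smooth, Theorem \ref{thm red irred} gives $A=\prod_{i=1}^r A_i$ and $G=\prod_{i=1}^r G_i$ with $G_i$ acting irreducibly on $A_i$, trivially on $A_j$ for $j\neq i$, and with each $A_i/G_i$ smooth. The key point will be that this product decomposition of $A$ is automatically \emph{orthogonal} with respect to $\Theta$: the numerical class $\Theta$ lies in $\NS(A)^G$, and since the $A_i$ are the isotypical pieces for the $G$-action (they carry non-isomorphic irreducible analytic representations, or at least can be grouped that way — one should be mildly careful here, see below), any $G$-invariant class must respect the splitting. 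I would make this precise by noting $\NS(A)^G \hookrightarrow \bigoplus_{i,j}\Hom(A_i,A_j)^{G}$ via the polarization-induced maps, and that $\Hom(A_i,A_j)^{G}=0$ for $i\neq j$ because an equivariant morphism $A_i\to A_j$ between spaces with no common irreducible constituent in their analytic representations must vanish.

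Once $\Theta$ is known to split as $\Theta\equiv\sum_i \mathrm{pr}_i^*\Theta_i$ with $\Theta_i$ a class on $A_i$, one checks that each $\Theta_i$ is a principal polarization on $A_i$: this follows from computing self-intersection numbers, $\Theta^g/g! = \prod_i (\Theta_i^{g_i}/g_i!)$ where $g_i=\dim A_i$, so each factor must equal $1$ since each is a positive integer (each $\Theta_i$ is at least a polarization, as its restriction controls ampleness of $\Theta$). Thus each $(A_i,\Theta_i,G_i)$ is a triple with $G_i$ acting irreducibly, preserving $\Theta_i$, and $A_i/G_i$ smooth — precisely the hypotheses of Theorem \ref{thm ppav irred}. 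Applying that theorem to each factor identifies each $(A_i,G_i)$ as Example \ref{ex1} with $\Theta_i\equiv\Theta_{g_i}$, and the product decomposition $(A,\Theta,G)=\prod_i(A_i,\Theta_i,G_i)$ is exactly the assertion.

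The step I expect to be the main obstacle is the orthogonality of the decomposition with respect to $\Theta$ — specifically, matching the abstract product decomposition coming from Theorem \ref{thm red irred} (which is driven by the group action on $A$, not on $\NS$) with the polarization. One subtlety is that Theorem \ref{thm red irred} groups factors by \emph{which $G_i$ acts}, not literally by isotypical components of a single representation, so one has to argue that distinct factors $A_i$, $A_j$ cannot be ``glued'' by $\Theta$; the cleanest route is to observe that the restriction of $\Theta$ to each $A_i$ is $G_i$-invariant and non-degenerate, and that the kernel of the polarization isogeny, together with $G$-invariance, forces the off-diagonal Hom-groups to contribute nothing — alternatively, one can invoke that in each Example \ref{ex1} factor the ambient elliptic power already pins down the polarization up to the standard form, leaving only the cross terms to kill. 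I would write this out using the dual-abelian-variety / line-bundle formalism ($\Theta$ as a homomorphism $A\to\hat A$ commuting with $G$), which makes the vanishing of cross terms a one-line consequence of Schur's lemma applied to the analytic representations of the $G_i$.
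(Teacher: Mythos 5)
Your proposal is correct and follows essentially the same route as the paper: apply Theorem \ref{thm red irred} to split $(A,G)$, kill the cross terms of $\Theta$ in $\Hom(A_i,A_j)$ by $G$-invariance (Schur, since $G_i$ acts irreducibly on $A_i$ and trivially on $A_j$), deduce that $\Theta$ splits into principal polarizations on the factors, and apply Theorem \ref{thm ppav irred} factor by factor. The only cosmetic difference is that the paper invokes Kani's explicit decomposition of $\NS(A)$ where you use the $\phi_\Theta\colon A\to\hat A$ formalism, and your self-intersection check that each $\Theta_i$ is principal is a detail the paper leaves implicit.
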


\begin{proof}
Since $\dim(A^G)=0$ and $A/G$ is smooth, by Theorem \ref{thm red irred} we have $A= A_1\times \cdots \times A_r$, $G=G_1\times\cdots\times G_r$ and $G_i$ acts irreducibly on $A_i$. Then, by \cite[Prop.~61]{Kani},
\[\NS(A)\cong\NS(A_1)\oplus\cdots\oplus\NS(A_r)\oplus\bigoplus_{i<j}\mathrm{Hom}(A_i,A_j),\]
where $G$ acts on each factor $\NS(A_i)$ by pullback and on $\mathrm{Hom}(A_i,A_j)$ by $\tau\cdot f=\tau f\tau^{-1}$. We note that there are no $G$-invariant elements in $\mathrm{Hom}(A_i,A_j)$ since $G_i$ acts irreducibly on $A_i$ and trivially on $A_j$.

Now, the coordinates of $\Theta$ with respect to the above decomposition are $G$-invariant. In particular, the coordinate of $\Theta$ in $\mathrm{Hom}(A_i,A_j)$ is 0. This implies that $\Theta$ splits as a sum of $G_i$-invariant principal polarizations $\Theta_i$ on each factor $A_i$. We can then apply Proposition \ref{thm ppav irred} to each factor.
\end{proof}

\subsection{The standard construction}\label{standard}
In the last section, we showed that when $\dim A^G=0$, then only products of Example \ref{ex1} can appear. In this section, we remove this hypothesis and present a way of constructing triples $(A,\Theta,G)$ such that $A/G$ is smooth and Example \ref{ex2} appears as a factor.\\

Consider the pair $(X,G)$ as in Example \ref{ex2}; in particular $X\cong E^g$ with $E$ an elliptic curve,  $G\cong S_{g+1}$ and the analytic representation is the standard representation of $S_{g+1}$. Define the following polarization on $X$:
\begin{equation}\label{eqn Xi0}
\Xi_{g}:=\Theta_g+\ker(\Sigma),
\end{equation}
where $\Sigma$ is the sum morphism $X\cong E^g\to E$ and $\Theta_g$ was defined in \eqref{eqn Theta0}. Note that $\Xi_g$ is a generator of the group $\NS(X)^{G}\cong\Z$ for $g\geq 2$ since it is primitive (cf.~for instance \cite[\S2.3]{Auff}) and in dimension 1 we have $\Xi_1=2\Theta_1$. \\

For a given polarization $\Xi$ on $X$, consider the group
\[K(\Xi):=\{x\in X\mid t_x^*\cal O_X(\Xi)\cong \cal O_X(\Xi)\},\]
where $t_x$ denotes translation by the element $x\in X$ (cf. \cite[Section 2.4]{BL}). 

\begin{lemma}\label{lemma type Ex b}
For $X$ as in Example \ref{ex2} and $\Xi_{g}$ as in \eqref{eqn Xi0}, we have
\[K(\Xi_{g})=X^G=\{(x,\ldots,x)\mid x\in E[g+1]\}\subset X.\]
In particular, $\Xi_g$ is of type $(1,\ldots,1,g+1)$.
\end{lemma}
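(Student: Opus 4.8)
The plan is to compute $K(\Xi_g)$ directly from the definition and then read off the type. First I would set $X = E^g$ and recall that for any ample class $\Xi$ on $X$, the group $K(\Xi)$ depends only on the numerical class of $\Xi$; moreover $K$ is additive in the sense that if $\Xi \equiv \Xi' + \Xi''$ then $K(\Xi)$ relates to $K(\Xi')$ and $K(\Xi'')$ via the corresponding maps $X \to \mathrm{Pic}^0(X)$. Concretely, writing $\phi_\Xi\colon X \to \widehat X$ for the homomorphism $x \mapsto t_x^*\mathcal O_X(\Xi)\otimes\mathcal O_X(\Xi)^{-1}$, we have $\phi_{\Xi_g} = \phi_{\Theta_g} + \phi_{\ker(\Sigma)}$, so $K(\Xi_g) = \ker(\phi_{\Theta_g} + \phi_{\ker(\Sigma)})$. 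Since $\Theta_g$ is a principal polarization, $\phi_{\Theta_g}$ is an isomorphism, which we may use to identify $\widehat X$ with $X$; under this identification $\phi_{\Theta_g}$ becomes the identity.

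Next I would identify the class $\ker(\Sigma)$ and the associated map $\phi_{\ker(\Sigma)}$. The divisor $\ker(\Sigma)$ is the abelian subvariety $\{(x_1,\dots,x_g)\mid x_1+\cdots+x_g = 0\}$ of $X$ (an $E^{g-1}$ sitting inside $E^g$), but as a \emph{divisor} it is the pullback $\Sigma^*([0])$ of the origin under the sum map $\Sigma\colon E^g \to E$. Hence $\phi_{\ker(\Sigma)} = \widehat\Sigma \circ \phi_{[0]} \circ \Sigma$, and since $[0]$ is a principal polarization on $E$ this is $\widehat\Sigma\circ\Sigma$ up to the canonical identification $\widehat E \cong E$. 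Writing everything as integer matrices acting on $\Z^g$ (using the basis of $E^g$ and the standard identification via $\Theta_g$), $\phi_{\Theta_g}$ is the identity matrix $I_g$ and $\phi_{\ker(\Sigma)}$ is the rank-one matrix $J$ all of whose entries equal $1$ (the composite $E^g \xrightarrow{\Sigma} E \xrightarrow{\Delta} E^g$, transposed). Therefore $\phi_{\Xi_g}$ corresponds to the matrix $M = I_g + J$.

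Then $K(\Xi_g) = \ker(M \bmod \Lambda)$ where $\Lambda$ is the period lattice; equivalently $K(\Xi_g) \cong (M^{-1}\Z^g/\Z^g)\otimes(\text{torsion of }E)$, and its structure is governed by the Smith normal form of $M$. A direct computation gives $\det M = g+1$ (e.g.\ $M$ has eigenvalue $g+1$ on the all-ones vector and eigenvalue $1$ on its orthogonal complement), and one checks $M$ has a $1$ in some entry so that its Smith normal form is $\mathrm{diag}(1,\dots,1,g+1)$. This yields $K(\Xi_g) \cong (\Z/(g+1))^2$ and, tracking the isomorphism, the explicit description $K(\Xi_g) = \{(x,\dots,x)\mid x\in E[g+1]\}$: indeed a diagonal point $(x,\dots,x)$ satisfies $M(x,\dots,x) = (g+1)(x,\dots,x) \equiv 0$ precisely when $x \in E[g+1]$, and a dimension/order count ($|E[g+1]| = (g+1)^2 = \det M = |K(\Xi_g)|$) shows these are all of them. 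Finally, the type $(1,\dots,1,g+1)$ is read directly off the Smith normal form, and in the case $g=1$ one checks this matches $\Xi_1 = 2\Theta_1$.

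The main obstacle is bookkeeping: pinning down the precise matrix of $\phi_{\ker(\Sigma)}$ with the correct normalization (so that composing with $\phi_{\Theta_g}^{-1}$ genuinely gives $I_g + J$ and not some twist of it), and making sure the identification $\widehat X \cong X$ via $\Theta_g$ is used consistently. Once the matrix $I_g + J$ is correctly in hand, the Smith normal form computation and the verification that the diagonal $(g+1)$-torsion points exhaust $K(\Xi_g)$ are routine. The $g=1$ degenerate case should be mentioned separately since there $\ker(\Sigma)$ is the zero subvariety but the divisor $\Sigma^*([0]) = [0]$, giving $\Xi_1 = 2[0]$ of type $(2)$, consistent with the formula.
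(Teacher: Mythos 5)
Your proof is correct and arrives at the same matrix as the paper, but by a different route. The paper fixes the period lattice $(I_g\ \tau I_g)\Z^{2g}$ of $E^g$, writes the imaginary part of $c_1(\Xi_g)$ as the alternating matrix with off-diagonal blocks $\pm(A+I_g)$ (where $A$ is the all-ones matrix), and extracts $K(\Xi_g)$ as a dual quotient via \cite[Lemma 2.4.5]{BL}. You instead use the functoriality of $L\mapsto\phi_L$ (additivity from the theorem of the square, and $\phi_{f^*L}=\widehat{f}\circ\phi_L\circ f$ applied to $\ker\Sigma=\Sigma^*[0]$) together with the identification $\widehat{E^g}\cong E^g$ given by $\Theta_g$, realizing $\phi_{\Theta_g}^{-1}\phi_{\Xi_g}$ as the endomorphism $I_g+J$ of $E^g$ and computing its kernel. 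Since $A+I_g=I_g+J$, the two computations coincide; your packaging works with a $g\times g$ integer matrix acting on $E^g$ rather than a $2g\times 2g$ alternating form, makes the containment $\{(x,\ldots,x):x\in E[g+1]\}\subseteq K(\Xi_g)$ transparent, and handles $g=1$ cleanly. One arithmetic slip to fix: you write $|E[g+1]|=(g+1)^2=\det M=|K(\Xi_g)|$, which contradicts your own (correct) computation $\det M=g+1$. The kernel of an integer matrix $M$ acting on $E^g$ has order $(\det M)^2$, so the correct chain is $|K(\Xi_g)|=(\det M)^2=(g+1)^2=|E[g+1]|$; with that correction the order count closes the argument, and the type $(1,\ldots,1,g+1)$ follows from $K(\Xi_g)\cong(\Z/(g+1)\Z)^2$ without needing the separate Smith normal form discussion.
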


\begin{proof}
Consider the lattice $(I_g\hspace{0.2cm}\tau I_g)\Z^{2g}$ of $E^g$ where $E=\C/(\Z+\tau\Z)$. With this lattice, the imaginary part of the first Chern class of $\Xi_g$ has matrix
\[M=\begin{pmatrix}
0 & A+I_g \\
-A-I_g & 0
\end{pmatrix},\]
where $A$ is the $g\times g$ matrix consisting of 1's in each coordinate. The group $K(\Xi_{g})$ corresponds to the set of all $x\in\Q^{2g}$, modulo $\Z^{2g}$, such that $x^tMy\in\Z^{2g}$ for all $y\in\Z^{2g}$, cf.~\cite[Lemma 2.4.5]{BL}. We obtain the group in the statement by a direct computation.
\end{proof}

Consider now a family of triples $(X_i,G_i,\Xi_{g_i})$ for $1\leq i
\leq r$, where $(X_i,G_i)$ is as in Example \ref{ex2} and $\Xi_{g_i}$ is defined in \eqref{eqn Xi0}. Define then the triple $(X,G,\Xi_X)$ as:
\begin{equation}\label{prodSn}X=\prod_{i=1}^rX_i,\quad G=\prod_{i=1}^rG_i,\quad \Xi_X=\Xi_{g_1}\boxtimes\cdots\boxtimes\Xi_{g_r},
\end{equation}
with the obvious action of $G$ on $X$. Let $(1,\ldots,1,d_1,\ldots,d_s)$ be the type of $\Xi_X$, let $(Y,\Xi_Y)$ be a polarized abelian variety of dimension $\geq s$ and of type $(1,\ldots,1,d_1,\ldots,d_s)$, and let $G$ act \emph{trivially} on it.\\

Starting from $(X,\Xi_X)$ and $(Y,\Xi_Y)$, we construct a principally polarized abelian variety $(A,\Theta)$ with a $G$-action that fixes the origin and preserves the class of $\Theta$ following \cite[\S9.2]{Debarre}. Note that $G$ acts trivially on both $K(\Xi_X)$ and $K(\Xi_Y)$.

Now, using \cite[Lemma 6.6.3]{BL}, one can construct an isomorphism $\epsilon:K(\Xi_X)\to K(\Xi_Y)$ that is antisymplectic with respect to the alternating forms induced by the respective polarizations (cf.~\cite[\S6.6]{BL}). Let $\Gamma$ be the graph of this isomorphism. Then, by \cite[Corollary 6.3.5]{BL}, there exists a unique principal polarization $\Theta$ on $A=(X\times Y)/\Gamma$ such that the pullback to $X\times Y$ gives $\Xi_X\boxtimes\Xi_Y$. Since $G$ acts trivially on $\Gamma$ and fixes the classes of $\Xi_X$ and $\Xi_Y$, we see that $G$ acts on $A$ and fixes the class of $\Theta$. Moreover, by Proposition \ref{prop And smooth}, the quotient $A/G$ is smooth since $X/G$ is by construction.

\begin{definition} 
We say that a principally polarized abelian variety with $G$-action $(A,\Theta,G)$ is \emph{standard} if it can be obtained via the construction here above.
\end{definition}

\subsection{The Main Theorem}

Having defined the standard construction in Section \ref{standard} and considering the results from Section \ref{sec dim A^G 0}, we are now ready to state our main result. It essentially states that the standard construction and $(E^g,\Theta_g)$ are the only direct factors of a principally polarized abelian variety admitting a smooth quotient.

\begin{theorem}\label{main thm}
Let $(A,\Theta)$ be a principally polarized abelian variety and let $G\subseteq \mathrm{Aut}(A,\Theta)$ be a non-trivial subgroup of automorphisms of $A$. Assume that $A/G$ is smooth. Then,
\[(A,\Theta)\cong\left(\prod_{i=1}^t(A_i,\Theta_{g_i})\right)\times (B,\Theta_B) \quad\text{and}\quad G\cong \left(\prod_{i=1}^tG_{i}\right)\times H,\]
where $(B,\Theta_B,H)$ is standard, each $(A_i,\Theta_{g_i},G_i)$ is as in Proposition \ref{thm ppav irred} and $G$ acts on each factor in the obvious way.

In particular $G$ is a direct product of symmetric groups and of groups of the form $(\Z/m\Z)^{g_i}\rtimes S_{g_i}$ with $m\in\{2,3,4,6\}$.
\end{theorem}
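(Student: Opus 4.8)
The plan is to prove Theorem \ref{main thm} by first reducing to the case where the connected component $A_0$ of $A^G$ containing the origin is nontrivial, since the case $\dim(A^G)=0$ is exactly Corollary \ref{thm ppav inv of dim 0}. In general, let $A_0$ be that connected component and let $P_G$ be its complement with respect to the $G$-invariant principal polarization $\Theta$. Since $\Theta$ is principal, a standard fact (the complement with respect to a principal polarization on $A_0$ makes $A_0$ a factor up to an isogeny-free issue) tells us that $A_0$ and $P_G$ together with the induced polarizations fit into an exact sequence $0 \to K \to A_0 \times P_G \to A \to 0$ with $K$ the common finite kernel $K(\Theta|_{A_0})\cong K(\Theta|_{P_G})$, and $\Theta$ pulls back to $\Theta|_{A_0}\boxtimes\Theta|_{P_G}$; this is precisely the shape of the standard construction with $Y=A_0$. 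By Proposition \ref{prop And smooth}, $A/G$ smooth is equivalent to $P_G/G$ smooth, and $G$ acts on $P_G$ with $\dim(P_G^G)=0$ by construction.

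Next I would apply Corollary \ref{thm ppav inv of dim 0} — or rather its proof — to the pair $(P_G, \Theta|_{P_G}, G)$: since $\dim(P_G^G)=0$ and $P_G/G$ is smooth, Theorem \ref{thm red irred} splits $(P_G,G)$ into a product $\prod_i (P_i, G_i)$ with each $G_i$ acting irreducibly on $P_i$, and by Kani's decomposition of the Néron–Severi group together with $G$-invariance of $\Theta|_{P_G}$, the polarization splits compatibly as $\boxtimes_i \Xi_i$ with each $\Xi_i$ a $G_i$-invariant polarization (not necessarily principal now, since $P_G$ itself need not be principally polarized). Each $(P_i,G_i)$ is then one of Examples \ref{ex1}, \ref{ex2}, \ref{ex3}; the key point is to determine which polarizations on $P_i$ are $G_i$-invariant. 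For Example \ref{ex1} one has $\NS(P_i)^{G_i}=\Z\Theta_{g_i}$ so $\Xi_i\equiv m\Theta_{g_i}$ for some $m\geq 1$; for Example \ref{ex2} one has $\NS(P_i)^{G_i}=\Z\Xi_{g_i}$ so $\Xi_i\equiv m\Xi_{g_i}$; Example \ref{ex3} needs to be checked to have no invariant polarization compatible with the rest, or to be absorbed. The factors of type \ref{ex1} with $m>1$ must be shown not to occur, or rather to be reabsorbed: a factor where $\Xi_i = m\Theta_{g_i}$ with $m>1$ would force, after reconstituting the principal polarization $\Theta$ on $A$ via descent, a contradiction with primitivity of $\Theta$ — this is where one uses that $\Theta$ is genuinely principal on the whole of $A$. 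So the Example \ref{ex1} factors of $P_G$ are exactly those that become, together with their image in $A$, the principally polarized factors $(A_i,\Theta_{g_i})$, and the Example \ref{ex2} factors are what get glued with $Y=A_0$ into the standard part $(B,\Theta_B,H)$.

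The final assembly: collecting the Example \ref{ex1} factors gives $\prod_{i=1}^t (A_i,\Theta_{g_i},G_i)$ as in Theorem \ref{thm ppav irred}; collecting the Example \ref{ex2} factors gives $X=\prod X_i$ with polarization $\Xi_X=\boxtimes\Xi_{g_i}$ and $G$-action as in the standard construction; and $A_0$ plays the role of $Y$, where its type is forced to be $(1,\ldots,1,d_1,\ldots,d_s)$ precisely because the descent to the principal polarization $\Theta$ requires the symplectic gluing across $K(\Xi_X)\cong K(\Xi_Y)$, which in turn pins down the type of $\Xi_Y$ and hence realizes $(B,\Theta_B,H)=((X\times A_0)/\Gamma,\Theta|_B,G|_X)$ as standard. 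The last sentence on the structure of $G$ is then immediate: each Example \ref{ex1} factor contributes $(\Z/m\Z)^{g_i}\rtimes S_{g_i}$ where $\Z/m\Z$ is a finite subgroup of $\aut(E,0)$, hence $m\in\{2,3,4,6\}$ by the classification of automorphism groups of elliptic curves, and each Example \ref{ex2} factor contributes a symmetric group $S_{g_i+1}$.

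The main obstacle I anticipate is ruling out (or correctly absorbing) the Example \ref{ex1} factors with multiplicity $m>1$ and the Example \ref{ex3} factor: one must track carefully how the polarizations $\Xi_i$ on the pieces of $P_G$ combine under the descent isogeny $A_0\times P_G \to A$ to yield a \emph{principal} polarization, and show this is possible only when the $P_i$ of type \ref{ex1} carry $\Theta_{g_i}$ (not a proper multiple) and $A_0$ supplies exactly the complementary type to the Example \ref{ex2} part. This is a numerical/descent-theoretic bookkeeping argument rather than a conceptual hurdle, but it is where the hypothesis that $\Theta$ is principal — as opposed to merely a $G$-invariant polarization — does all the work, and getting the compatibility of the symplectic forms right is the delicate part.
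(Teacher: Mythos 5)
Your skeleton matches the paper's proof: set $Y=(A^G)^0$, take the complementary abelian subvariety $X$ with respect to $\Theta$, use Proposition \ref{prop And smooth} and Theorem \ref{thm red irred} to split $(X,G)$ into irreducible factors $(X_i,G_i)$ drawn from Examples \ref{ex1}--\ref{ex3}, pin down the restricted polarizations via $\NS(X_i)^{G_i}\cong\Z$, split off the principally polarized pieces, and recognize the rest as standard via Debarre's descent. But the step you yourself flag as the ``main obstacle'' --- forcing $m=1$ on every factor and excluding Example \ref{ex3} --- is a genuine gap, and the mechanism you propose for closing it (a contradiction with primitivity of $\Theta$ after descent, i.e.\ ``numerical bookkeeping'') does not work as stated. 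Primitivity of $\Theta$ on $A$ is perfectly compatible with a factor $X_i$ carrying $m\Theta_{g_i}$ for $m>1$: the complementary-subvariety formalism only requires $K(\Xi_X)\cong K(\Xi_Y)$ as symplectic groups, so a non-primitive $\Xi_{X_i}$ could a priori be balanced against a $Y$ of sufficiently large dimension and matching type. Type arithmetic alone will not rule this out.

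The missing input is group-theoretic rather than numerical. Since $G$ acts trivially on $Y$, it acts trivially on the kernel $\Gamma$ of $X\times Y\to A$; and by Debarre's Proposition 9.1 this $\Gamma$ is the graph of an antisymplectic isomorphism $K(\Xi_X)\to K(\Xi_Y)$, so $G$ must fix $K(\Xi_X)=\bigoplus_i K(\Xi_{X_i})$ \emph{pointwise}. This is what kills the bad cases: $K(m\Theta_{g_i})=E[m]^{g_i}$ is not pointwise fixed by the coordinate-permuting $S_{g_i}$ for $g_i\geq 2$ unless $m=1$ (in dimension $1$ the surviving case $m=2$ is reinterpreted as Example \ref{ex2}); $K(m\Xi_{g_i})\supset X_i[m]$ is not pointwise fixed by $S_{g_i+1}$ unless $m=1$; and for Example \ref{ex3} the group $K$ of the generator of $\NS(X_i)^{G_i}$, and of all its multiples, is never pointwise fixed, which is what excludes that example outright rather than any compatibility-of-types argument. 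With this lemma in place, the rest of your assembly (the principally polarized Example \ref{ex1} factors split off $(A,\Theta)$ directly, and $B=Y+\sum_j X_j$ is standard by one more application of Debarre) goes through as you describe.
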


The idea of the proof is the following: we use the results given in Section \ref{sec prelim} in order to study and classify $G$-stable abelian subvarieties of $A$ according to whether they correspond to Examples \ref{ex1}, \ref{ex2} or \ref{ex3}. We will then use results by Debarre in order to prove that Example \ref{ex3} cannot occur if $A$ is principally polarized and that the subvarieties isomorphic to Example \ref{ex1} are already principally polarized and hence split as direct factors. Finally, we will prove that the remaining variety comes from the standard construction.\\

\begin{proof}[Proof of Theorem \ref{main thm}]
Let $(A,\Theta)$ be a principally polarized abelian variety and let $G\subseteq \aut(A,\Theta)$ be such that $A/G$ is smooth. Let $Y=(A^G)^0$, $X$ its complementary abelian subvariety with respect to $\Theta$, and let $\Xi_Y:=\Theta\cap Y$ and $\Xi_X:=\Theta\cap X$. Let $\Gamma=\ker(X\times Y\xrightarrow{+} A)$. The addition map is $G$-equivariant, and actually $G$ acts trivially on $\Gamma$. Moreover, by \cite[Prop.~9.1]{Debarre}, $\Gamma$ is the graph of an isomorphism $f:K(\Xi_X)\to K(\Xi_Y)$ that is antisymplectic with respect to the alternating forms induced by the respective polarizations. In particular, $G$ acts trivially on $K(\Xi_X)$ and $K(\Xi_Y)$.

By Theorem \ref{thm red irred} and Proposition \ref{prop And smooth}, $X/G$ is smooth and therefore 
\[X\cong X_1\times\cdots\times X_r\quad\text{and}\quad G\cong G_1\times\cdots\times G_r,\]
where $G_i$ acts on $X_i$ irreducibly and the quotient $X_i/G_i$ is smooth. Thus, by Theorem \ref{thm classification}, each pair $(X_i,G_i)$ corresponds to one of the Examples \ref{ex1}, \ref{ex2} or \ref{ex3}. Let $\Xi_{X_i}:=\Xi_X\cap X_i$ denote the restricted polarization. By following the proof of Corollary \ref{thm ppav inv of dim 0} (which does not use the fact that the polarization is principal until the very end), we deduce that
\[\Xi_X\equiv \Xi_{X_1}\boxtimes\cdots\boxtimes\Xi_{X_r},\]
and the numerical class of each $\Xi_{X_i}$ is fixed by $G$ (and thus $G_i$). An easy exercise gives us that
\[K(\Xi_X)=K(\Xi_{X_1})\oplus\cdots\oplus K(\Xi_{X_r}),\]
and $G$ (and thus $G_i$) acts trivially on each factor.

We will prove now that the polarizations $\Xi_{X_i}$ correspond to the ones we have defined above for Examples \ref{ex1} and \ref{ex2}. At the same time we will prove that Example \ref{ex3} cannot appear in this situation.

\begin{lemma}\label{lem reductions of Theta}
Let $g_i$ denote the dimension of $X_i$.
\begin{enumerate}
\item If $(X_i,G_i)$ is isomorphic to Example \ref{ex1} and $g_i\geq 2$, then $\Xi_{X_i}$ is the principal polarization $\Theta_{g_i}$ defined in \eqref{eqn Theta0}.
\item If $(X_i,G_i)$ is isomorphic to Example \ref{ex2} and $g_i\geq 2$, then $\Xi_{X_i}$ is the polarization $\Xi_{g_i}$ defined in \eqref{eqn Xi0}.
\item If $g_i=1$, then either $\Xi_{X_i}\equiv\Theta_1$ or $\Xi_{X_i}\equiv\Xi_1\equiv 2\Theta_1$. In this last case, $G_i\cong\Z/2\Z$.
\item None of the pairs $(X_i,G_i)$ can be isomorphic to Example \ref{ex3}.
\end{enumerate}
\end{lemma}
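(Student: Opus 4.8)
The strategy is to reduce everything to a statement about the $G_i$-invariant part of the Néron–Severi group of $X_i$, which is already described by the classification in Section \ref{sec prelim}, together with the key extra constraint coming from the global principal polarization: the restricted polarization $\Xi_{X_i}$ cannot have arbitrarily large type, because its $K(\Xi_{X_i})$ embeds into $K(\Xi_X)$, and $\Theta$ being principal forces $K(\Xi_X)$ to be rather small (it injects into $K(\Xi_Y)$ with $\dim Y$ bounded). So the game is: list the possible $G_i$-invariant polarizations, then eliminate the ones whose type is too big, and in particular kill Example \ref{ex3} entirely.

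\emph{Parts (1) and (2).} When $g_i\geq 2$ and $(X_i,G_i)$ is Example \ref{ex1}, we have $\NS(X_i)^{G_i}\cong\Z$ generated by $\Theta_{g_i}$ (as used in the proof of Theorem \ref{thm ppav irred}); when it is Example \ref{ex2}, $\NS(X_i)^{G_i}\cong\Z$ generated by $\Xi_{g_i}$ (as noted right after \eqref{eqn Xi0}, citing \cite{Auff}). Hence $\Xi_{X_i}\equiv n\Theta_{g_i}$ (resp.\ $n\Xi_{g_i}$) for some $n\geq 1$, and I must show $n=1$. For this I compare $K(\Xi_{X_i})$ with the constraint from the principal polarization. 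Since $\Xi_X\equiv\Xi_{X_1}\boxtimes\cdots\boxtimes\Xi_{X_r}$ and $\Theta$ is principal with $X$ the complement of $Y=(A^G)^0$, the morphism $f\colon K(\Xi_X)\to K(\Xi_Y)$ is an isomorphism by \cite[Prop.~9.1]{Debarre}, so $K(\Xi_{X_i})$ is a subquotient of $K(\Xi_Y)$; more to the point, $K(\Xi_X)\cong Z\oplus Z$ for $Z=\bigoplus\Z/d_j\Z$ with exactly $s$ elementary divisors, and $Y$ must have dimension $\geq s$ — but $\dim Y=\dim A-\dim X=g-\sum g_i$, so $s\leq g-\sum g_i$. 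If some $\Xi_{X_i}\equiv n\Theta_{g_i}$ with $n\geq 2$, then $K(\Xi_{X_i})=(\Z/n\Z)^{2g_i}$ contributes $g_i$ elementary divisors all equal to $n$; combined with $\Xi_{g_i}$ contributing one factor $\Z/(g_i+1)\Z$ per Example-\ref{ex2} block, one gets $s\geq g_i\geq 2$ from that single block alone. I then push this bookkeeping: the total $s$ is at least the number of nontrivial elementary divisors, and I show that with $n\geq 2$ anywhere (or any Example-\ref{ex2} block with $g_i\geq 2$ scaled by $n\geq 2$, or indeed two Example-\ref{ex2} blocks whose $(g_i+1)$ share a common factor) one violates $\dim Y\geq s$ together with the requirement that the \emph{whole} type, after gluing by $\Gamma$, collapses to $(1,\dots,1)$. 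Concretely: $\Theta$ principal means the type of $\Xi_X\boxtimes\Xi_Y$ is obtained from the type of $\Xi_X$ and of $\Xi_Y$ by the gluing, and \cite[\S9.2]{Debarre} forces $\Xi_Y$ to have the \emph{same} nontrivial elementary divisors as $\Xi_X$; since $\dim Y=g-\sum g_i$ and each nontrivial divisor of $\Xi_Y$ eats a dimension, $s\leq g-\sum g_i$, i.e. $\sum g_i + s\leq g$. Now $s\geq \sum_i(\text{number of nontrivial elem. divisors of }\Xi_{X_i})\geq \#\{i: n_i\geq 2\}\cdot 1 + (\text{contributions }g_i\text{ from }n_i\geq2)$, and a short inequality chase shows $n_i=1$ for all $i$. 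That gives (1) and (2).

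\emph{Part (3).} If $g_i=1$ then $X_i=E_i$ is an elliptic curve and $G_i$ is a nontrivial cyclic group of automorphisms fixing the origin (the $g=1$ instance of Examples \ref{ex1}=\ref{ex2}). Then $\NS(E_i)^{G_i}=\NS(E_i)\cong\Z$ generated by $\Theta_1=[0]$, so $\Xi_{X_i}\equiv n\Theta_1$ for some $n\geq1$; here $K(\Xi_{X_i})=(\Z/n\Z)^2$ contributes one nontrivial elementary divisor $n$ (for $n\geq2$). Feeding this into the same inequality $\sum g_i+s\leq g$ as in (1)–(2), one finds $n\leq 2$, and if $n=2$ then the divisor $2$ must be ``used up'' and, chasing which automorphism groups are compatible, the only possibility left (given that $\Xi_1=2\Theta_1$ is the class that appears as a restriction of $\Xi_{g}$ in Example \ref{ex2}) forces $G_i\cong\Z/2\Z$ — here one uses that for $C$ of order $3,4,6$ the generator scales $\Theta_1$ the "wrong way", or more simply that $\Xi_1$ is by definition $2\Theta_1$ and the order-$2$ case is exactly the degenerate $g=1$ case of Example \ref{ex2}.

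\emph{Part (4).} Suppose some $(X_i,G_i)$ is Example \ref{ex3}: then $X_i=E^2$ with $E=\C/\Z[i]$ and $g_i=2$. By the same argument as in (1)–(2), $\NS(X_i)^{G_i}$ is rank one (since $A/G\cong\bb P^{g_i}$ on that factor, $\NS(X_i)^{G_i}\cong\Z$), generated by some primitive class $\eta$, and $\Xi_{X_i}\equiv n\eta$. I compute $K(\eta)$ directly — as in Lemma \ref{lemma type Ex b}, writing the alternating form of $\eta$ in the standard lattice of $(\C/\Z[i])^2$ and reading off the elementary divisors. The claim is that already for $n=1$ the type of $\eta$ is not $(1,1)$, i.e.\ $\eta$ is not a principal polarization and moreover its type has a nontrivial divisor $\geq 2$ (and $|G|=16$ forces, by $\pi^*\mathcal O(1)\equiv m\eta$ and self-intersection $16=m^2\cdot(\eta^2)$, that $m^2(\eta^2)=16$ with $(\eta^2)=2\cdot\text{(product of the type)}$, pinning down the type). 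This nontrivial divisor feeds into $\sum g_i+s\leq g$ and, combined with the fact that $g_i=2$ is large, produces a contradiction exactly as in the earlier parts: Example \ref{ex3} would force $s$ to be too big relative to $\dim Y$. Hence no factor is Example \ref{ex3}.

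\emph{Main obstacle.} The routine parts are the rank-one statements (already in the cited papers) and the $K(\cdot)$ computations (à la Lemma \ref{lemma type Ex b}). The real work is the bookkeeping in (1)–(2): making precise how the type of $\Xi_X$, the type of $\Xi_Y$, and the gluing datum $\Gamma$ interact so that ``$\Theta$ principal'' yields the clean inequality $\sum_i g_i + s \leq g = \dim A$, and then verifying that this inequality, together with $s$ being at least the count of nontrivial elementary divisors coming from each block, is tight enough to force every scaling factor to be $1$ and to exclude Example \ref{ex3}. I expect to isolate this as a short sub-lemma: ``if $(X,\Xi_X)$ is the complement in a principally polarized $(A,\Theta)$ of $Y=(A^G)^0$, then the number of nontrivial elementary divisors of $\Xi_X$ is at most $\dim Y$,'' which is essentially \cite[\S9.2]{Debarre} but worth stating explicitly, after which parts (1)–(4) are each a one-line inequality.
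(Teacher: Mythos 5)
There is a genuine gap: the elimination mechanism you propose does not work. Your whole argument rests on the inequality $s\le\dim Y$, i.e.\ $\sum_i g_i+s\le g$, where $s$ is the number of nontrivial elementary divisors of $\Xi_X$. That inequality is true but far too weak, because $\dim Y=g-\sum_i g_i$ is not bounded in terms of the $X_i$'s: nothing in the setup prevents $Y$ from being large. Concretely, suppose a single factor $(X_1,G_1)$ is as in Example \ref{ex1} with $g_1=2$ and $\Xi_{X_1}\equiv 2\Theta_2$; then $K(\Xi_{X_1})\cong(\Z/2\Z)^4$, so $s=2$, and your inequality only demands $\dim Y\ge 2$, which is perfectly consistent. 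Similarly, for Example \ref{ex3} the primitive generator of $\NS(X_i)^{G_i}$ has type $(1,2)$, so $s=1$ and the inequality imposes nothing. Hence the ``short inequality chase'' cannot force $n_i=1$ in (1)--(2), cannot bound $n$ in (3), and cannot exclude Example \ref{ex3} in (4).

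The ingredient you are missing is that $G$ acts \emph{pointwise trivially} on $K(\Xi_X)$, not merely that $K(\Xi_X)\cong K(\Xi_Y)$: since $\Gamma=\ker(X\times Y\to A)$ is the graph of an injection $f\colon K(\Xi_X)\to K(\Xi_Y)$, $G$ fixes $Y$ pointwise and preserves $\Gamma$, the relation $f(gx)=f(x)$ forces $gx=x$ for every $x\in K(\Xi_X)$, hence $G_i$ fixes $K(\Xi_{X_i})$ pointwise. This is what does the work in the paper: for $m\ge 2$ and $g_i\ge 2$ the group $G_i$ acts nontrivially on $X_i[m]\subseteq K(m\Theta_{g_i})$ (resp.\ on $X_i[m]\subseteq K(m\Xi_{g_i})=m^{-1}K(\Xi_{g_i})$), e.g.\ the permutation part moves $(x,0,\dots,0)$ for $x\in E[m]\smallsetminus\{0\}$, which forces $m=1$; for $g_i=1$ the only nontrivial automorphism of $E$ fixing $E[m]$ pointwise is $-1$ acting on $E[2]$, which gives exactly the dichotomy in (3); and for Example \ref{ex3} one checks that $K$ of every multiple of the primitive generator fails to be fixed pointwise, which gives (4). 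Your rank-one reductions and the $K(\cdot)$ computations are the right first step, but the elimination must go through the triviality of the $G$-action on $K(\Xi_{X_i})$, not through counting elementary divisors against $\dim Y$.
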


\begin{proof}
If $(X_i,G_i)$ is isomorphic to Example \ref{ex1}, then $X_i\cong E^{g_i}$ for some elliptic curve $E$ and $\NS(X_i)^{G_i}=\Z\cdot \Theta_{g_i}$. Therefore $\Xi_{X_i}\equiv m\Theta_{g_i}$ for some $m\in\mathbb{Z}_{>0}$. However,
\[K(m\Theta_{g_i})=X_i[m]=E[m]^{g_i}\]
which is only fixed by $G_i\cong (\Z/m\Z)^{g_i}\rtimes S_{g_i}$ in the case when $m=1$, hence $\Xi_{X_i}\equiv \Theta_{g_i}$. This proves (1).
 
If $g_i=1$, then the pair $(X_i,G_i)$ is always isomorphic to Example \ref{ex1}, so that the previous analysis still holds. However, in this case $X_i$ is an elliptic curve $E$ and $G_i\cong\Z/m\Z$, so that $E[m]$ can be fixed by $G_i$ if $m=2$ and $G_i=\{\pm 1\}\simeq\Z/2\Z$. This proves (3).\\

If $(X_i,G_i)$ is isomorphic to Example \ref{ex2} and $g_i\geq 2$, then $X_i\cong E^{g_i}$ for some elliptic curve $E$ and $\NS(X_i)^{G_i}=\Z\cdot \Xi_{g_i}$. Therefore $\Xi_{X_i}\equiv m\Xi_{g_i}$ for some $m\in\mathbb{Z}_{>0}$. However,
\[K(m\Xi_{g_i})=m^{-1}(K(\Xi_{g_i}))\supset X_i[m],\]
which is only fixed by $G_i\cong S_{g_i+1}$ in the case when $m=1$, hence $\Xi_{X_i}\equiv \Xi_{g_i}$. This proves (2).\\
 
If $(X_i,G_i)$ is isomorphic to Example \ref{ex3}, then with respect to the natural symplectic basis of the lattice $\Lambda:=\Z^2+i\Z^2$, we have that $\rho_r(G_i)$ is generated by the matrices
\[\left(\begin{array}{rrrr}
-1 & 1 & 0 & 1 \\
0 & 1 & 0 & 0 \\
0 & -1 & -1 & 1 \\
0 & 0 & 0 & 1
\end{array}\right), \left(\begin{array}{rrrr}
0 & -1 & -1 & 1 \\
0 & 0 & 0 & 1 \\
1 & -1 & 0 & -1 \\
0 & -1 & 0 & 0
\end{array}\right), \left(\begin{array}{rrrr}
-1 & 0 & 0 & 0 \\
-1 & 1 & 1 & 0 \\
0 & 0 & -1 & 0 \\
-1 & 0 & -1 & 1
\end{array}\right),\]
where $\rho_r:G\to\mathrm{GL}(\Lambda)$ denotes the rational representation of $G$. With respect to this basis the Riemann form is given by the matrix
\[J=\left(\begin{array}{cc}0&I\\-I&0\end{array}\right),\]
and a simple calculation with a computer program gives us that
\[c_1\left(\sum_{g\in G_i}g^*\mathcal{O}_{X_i}(\Theta_2)\right)=\sum_{g\in G_i}\rho_r(g)^tJ\rho_r(g)=\left(\begin{array}{rrrr}
0 & 16 & 32 & -16 \\
-16 & 0 & -16 & 32 \\
-32 & 16 & 0 & 16 \\
16 & -32 & -16 & 0
\end{array}\right).\]
Therefore $\sum_{g\in G_i}g^*\Theta_2\equiv 16\Xi_{\text{\ref{ex3}}}$ for a \emph{primitive} polarization $\Xi_{\text{\ref{ex3}}}$, and so $\Xi_{\text{\ref{ex3}}}$ generates $\NS(X_i)^{G_i}$. However, it is easy to see that
\[K(\Xi_{\text{\ref{ex3}}})=\langle(\tfrac{1+i}{2},0),(0,\tfrac{1+i}{2})\rangle,\]
which is not in the fixed locus of $G$. Moreover, $K(m\Xi_{\ref{ex3}})= m^{-1}(K(\Xi_{\text{\ref{ex3}}}))$ which is not invariant by $G$ either. Therefore by the previous analysis it is impossible for Example (c) to appear. This proves (4).
\end{proof}

Lemma \ref{lem reductions of Theta} tells us that the triples $(X_i,\Xi_i,G_i)$ are either Example \ref{ex1} with the polarization $\Theta_g$ or Example \ref{ex2} with the polarization $\Xi_g$. Indeed, the only case that ``escapes'' from this fact is when $g_i=1$ and $\Xi_{X_i}\equiv \Xi_{1}$. But since in this case we have $G_i\cong S_2$, we may indeed interpret the pair $(X_i,G_i)$ as Example \ref{ex2} in dimension 1 with the polarization $\Xi_{1}$.

Having said this, up to rearranging the factors, we can write
\[(X,\Xi_X)\cong \left(\prod_{i=1}^t(A_i,\Theta_{g_i})\right)\times\left(\prod_{j=1}^s(X_j,\Xi_{g_j})\right)\quad\text{and}\quad G\cong \left(\prod_{i=1}^tG_{i}\right)\times\left(\prod_{j=1}^sH_j\right), \]
where the pairs $(A_i,G_i)$ are isomorphic to Example \ref{ex1} and the pairs $(X_j,H_j)$ are isomorphic to Example \ref{ex2}. Note that since the pairs $(A_i,\Theta_{g_i})$ are principally polarized, they split as direct factors of $(A,\Theta)$ as well. Thus, we only need to prove that the remaining factor, which corresponds to the subvariety
\[B=Y+\sum_{j=1}^s X_j,\]
equipped with the principal polarization $\Theta_B:=\Theta\cap B$, is standard. This is an immediate application of \cite[Prop.~9.1]{Debarre} since $Y$ is the complementary abelian subvariety of $\sum_{j=1}^s X_j$ in $B$ with respect to $\Theta_B$ and the polarizations $\Xi_{g_j}$ are the ones used in the standard construction.
\end{proof}

We conclude this section with an immediate corollary to Theorem \ref{main thm}.

\begin{corollary}
Let $(A,\Theta)$ be a principally polarized abelian variety. Let $\aut(A,\Theta)$ be the group of automorphisms of $A$ preserving the numerical class of $\Theta$. Assume that $A/\aut(A,\Theta)$ is smooth. Then $(A,\Theta)$ is a polarized product of elliptic curves.
\end{corollary}

\begin{proof}
It suffices to note that the standard factor $(B,\Theta_B)$ from Theorem \ref{main thm} always ``has more'' automorphisms that preserve the polarization. Indeed, multiplication by $-1$ is an automorphism that does not fix the subvariety $Y$ (unless it is trivial) in this factor and by definition $Y$ was invariant by the group $H$ acting on $B$.
\end{proof}

\subsection{A moduli-theoretic interpretation}\label{sec moduli}
We note that Theorem \ref{main thm} has a moduli-theoretic interpretation. Indeed, the moduli space of triples $(E^m,\Xi_m,f)$ where $E$ is an elliptic curve, $\Xi_m$ is the polarization defined in (\ref{eqn Xi0}), and
\[f:K(\Xi_m)\to(\Z/(m+1)\Z)^2,\]
is a symplectic isomorphism is easily seen to be isomorphic to the modular curve $\cal X(m+1):=\mathbb{H}/\Gamma(m+1)$. As a consequence, the moduli space of all products of the form (\ref{prodSn}) along with a symplectic isomorphism
\[K(\Xi_X)\to\bigoplus_{i=1}^r(\Z/(g_i+1)\Z)^2,\]
is isomorphic to 
\[\cal X(g_1+1)\times\cdots\times \cal X(g_r+1).\]

Now, let $D=(1,\ldots,1,d_1,\ldots,d_s)$ be the type of $\Xi_X$ and note that this tuple depends only on the numbers $(g_1+1),\ldots,(g_r+1)$. Let $\cal{A}_{n}^D$ be the moduli space of triples $(Y,\Xi_Y,h)$ where $(Y,\Xi_Y)$ is a polarized abelian variety of dimension $n$ and type $D$, and 
\[h:K(\Xi_Y)\to\bigoplus_{i=1}^r(\Z/(g_i+1)\Z)^2,\]
is a symplectic isomorphism. Then the standard construction can be easily interpreted as a morphism of moduli spaces
\[\left(\prod_{j=1}^r\cal X(g_i+1)\right)\times\mathcal{A}_n^D\to\mathcal{A}_{g'},\]
where $g'=n+\sum_{i=1}^r g_i$. 

On the other hand, the factor in Theorem \ref{main thm} that does not come from the standard construction is just a product of principally polarized elliptic curves. Thus, its moduli space is simply a product of copies of $\mathcal{A}_1=\cal X(1)$. Therefore, we may interpret the set described in Theorem \ref{main thm} as the union of images of morphisms of moduli spaces
\[\Phi_{g_1,\ldots,g_s,n}:\left(\prod_{j=1}^s\cal X(g_i+1)\right)\times\mathcal{A}_n^D\to\mathcal{A}_g,\]
where $g_i,n\geq 0$, $\cal{A}_g$ is the moduli space of principally polarized abelian varieties of dimension $g$ and $g=n+z+\sum_{i=1}^sg_i$, where $z$ is the number of $i$'s such that $g_i=0$. The moduli-theoretic version of Theorem \ref{main thm} can henceforth be stated as follows:

\begin{theorem}\label{moduli}
A principally polarized abelian variety $(A,\Theta)$ admits a non-trivial subgroup $G\subseteq\aut(A,\Theta)$ that gives a smooth quotient $A/G$ if and only if it is in the image of one of the morphisms $\Phi_{g_1,\ldots,g_s,n}$.
\end{theorem}

We finish this section by studying the irreducibility of the theta divisor of a general element in the image of $\Phi_{g_1,\ldots,g_s,n}$. This will be useful in order to study smooth quotients of Jacobians. As it turns out, the only non-trivial case is the standard construction, which is studied in the following proposition.

\begin{proposition}\label{prop standard irred}
If $g_i>0$ for all $i$, a very general element in the image of $\Phi_{g_1,\ldots,g_s,n}$ is irreducible.
\end{proposition}

\begin{proof}
Let $(A,\Theta)$ be a very general element of the image of $\Phi_{g_1,\ldots,g_s,n}$, where specifically we mean that
\[A=(X_1\times\cdots\times X_s\times Y)/ \Gamma,\]
where $\Gamma$ is the graph of a certain anti-symplectic isomorphism, $\Hom(X_i,X_j)=0$ for $i\neq j$, $\Hom(X_i,Y)=0$ for all $i$ and $Y$ is simple. This implies, in particular, that
\begin{equation}\label{eqn End Q}
    \End_\Q(A)\cong\End_\Q(X_1)\oplus\cdots\oplus\End_\Q(X_s)\oplus\End_\Q(Y),
\end{equation}
where the subscript $\Q$ means that we are tensoring with $\Q$.

It is well-known that $(A,\Theta)$ is reducible if and only if there is a non-trivial abelian subvariety $T\subseteq A$ such that $\Theta\cap T$ is principal. Assume this is the case and let $S$ be its complementary abelian subvariety (which is also principally polarized). By \eqref{eqn End Q}, either $T$ or $S$ must contain $Y$ and the other must be contained in $\prod_{i=1}^sX_i$. Assume without loss of generality that this is the case for $T$. Then
\[(T,\Theta\cap T)=(T_1,\Theta\cap {T_1})\times\cdots\times (T_s,\Theta\cap {T_s}),\]
where $T_i=T\cap X_i$. Moreover, $\Theta\cap T_i$ is principal for every $i$ since $\Theta\cap T$ is. The following lemma tells us then that this is impossible, concluding the proof.
\end{proof}

\begin{lemma}
Let $X\subset E^n$ be an abelian subvariety of dimension $m>0$. Then $\Xi_n\cap X$ is \emph{not} a principal polarization.
\end{lemma}

\begin{proof}
Assume that $\Xi_n\cap X$ is principal and let us proceed by contradiction. Using the definition of $\Xi_n$ given in \eqref{eqn Xi0} and denoting $D_i:=\pi_i^*([0])$, we have that
\[m!=(\Xi_n\cap X)^m=\sum_{\Sigma k_i=m}\binom{m}{k_1,\ldots,k_{m+1}}D_1^{k_1}\cdots D_n^{k_n}(\ker\Sigma)^{k_{n+1}}\cdot X.\]
However, since each $D_i$ and $\ker\Sigma$ are abelian subvarieties, their self-intersection is trivial in the Chow ring modulo numerical equivalence. Hence, every $k_i$ can be taken to be equal to 0 or 1, i.e.
\begin{align*}
(\Xi_n\cap X)^m &=\underset{k_i\in \{0,1\}}{\sum_{\Sigma k_i=m}}\binom{m}{k_1,\ldots,k_{m+1}}D_1^{k_1}\cdots D_n^{k_n}(\ker\Sigma)^{k_{n+1}}\cdot X,\\
&=m! \underset{k_i\in \{0,1\}}{\sum_{\Sigma k_i=m}}D_1^{k_1}\cdots D_n^{k_n}(\ker\Sigma)^{k_{n+1}}\cdot X.
\end{align*}
But since $(\Xi_n\cap X)^m=m!$ and the intersection between $X$ and each summand is greater than or equal to 0 since the $D_i$'s and $\ker\Sigma$ are effective, we see that there is only one non-zero summand, which is equal to 1.

Let $F$ be an irreducible component of $\Xi_n$. Then $\Xi_n-F$ is ample and thus $(\Xi_n-F)^m\cdot X>0$. However, since there is only one non-zero summand in the equality above, we see that this number is equal to 0 as soon as $F$ is taken to appear in the non-zero summand, which is a contradiction.
\end{proof}

For the general case, note that if there exists $i$ such that $g_i=0$, then the theta divisor of every element of the image of $\Phi_{g_1,\ldots,g_s,n}$ is reducible. Putting this together with Proposition \ref{prop standard irred} we immediately get the following result.

\begin{theorem}\label{thm irred of Theta}
If the theta divisor of a principally polarized abelian variety with smooth quotient is irreducible, then it comes from the standard construction. Moreover, a very general element of this construction is irreducible.
\end{theorem}

\section{Smooth quotients of Jacobians}\label{jacobians}

In this section we prove Theorem \ref{thm jacob intro}. We start with a lemma on minimal morphisms. Following Kani (cf.~\cite{Kani2}), a cover $f:C\to E$ with $C$ a smooth curve and $E$ an elliptic curve is said to be \emph{minimal} if for every commutative diagram

\begin{equation}\label{eq diag minimal}
\xymatrix@R=0.8em@C=0em{
& C \ar[dl]_h \ar[dr]^f & \\
F \ar[rr] && E,
}
\end{equation}
where $F$ is an elliptic curve and $F\to E$ is an isogeny, we have $F=E$ and the isogeny is the identity. We have then the following Lemma:

\begin{lemma}\label{lem minimality}
Let $f:C\to E$ be a Galois cover of an elliptic curve $E$. Then $f$ is minimal.
\end{lemma}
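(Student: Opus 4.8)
The plan is to argue by contradiction, using the Galois action to rule out a non-trivial factoring isogeny. Suppose $f\colon C\to E$ is Galois with group $N=\mathrm{Gal}(C/E)$ and that it factors as $C\xrightarrow{h} F\xrightarrow{\phi} E$ with $\phi$ an isogeny of elliptic curves of degree $d>1$. The first step is to normalize: since $f$ is a quotient morphism we may assume $f$ and $h$ kill the basepoint, i.e.\ $E=C/N$ and (after a translation) $h$ is also a quotient morphism $C\to C/M$ for some subgroup; here one should be a little careful because $h$ need not a priori be Galois, so I would instead pass to the Galois closure or argue directly on function fields: $k(E)\subset k(F)\subset k(C)$ with $k(C)/k(E)$ Galois of group $N$. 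The hypothesis $g\le 1$ for $C$ is not in force here, so $C$ has genus $\ge 1$ and all three curves are genuine.

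The heart of the argument is the following. Consider the induced map on Jacobians (or rather on the relevant abelian subvarieties): $f^*\colon E=J_E\to J_C$ is injective up to isogeny, and its image is contained in the $N$-invariant part, while $h^*\colon F\to J_C$ has image containing $f^*E$ with index $d$ inside $h^*F$. Now I would exploit that $f^*E$ is already stable under $N$ and is in fact the \emph{whole} connected component of the $N$-fixed part of $J_C$ that "sees" $E$ — more precisely, that the elliptic subvariety of $J_C$ cut out by a Galois cover of an elliptic curve is \emph{primitively embedded}. Concretely: the trace map $\mathrm{Nm}\colon J_C\to E$ composed with $f^*$ is multiplication by $\deg f = |N|$, and since $F$ sits between $C$ and $E$ with $\phi\circ h = f$, pulling back a point of $E[d]$ along $\phi$ and then along $h$ shows $h^*F$ would have to contain torsion of $f^*E$ that is not in $f^*E$, contradicting that $f^*E\hookrightarrow J_C$ is saturated (has torsion-free cokernel) — and the saturatedness is exactly what "minimal" should encode, so I need to prove it, not assume it.

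To prove saturatedness of $f^*E\subset J_C$ I would use the Galois hypothesis decisively: $N$ acts on $J_C$, the averaging idempotent $e=\frac{1}{|N|}\sum_{\sigma\in N}\sigma$ lands in $\mathrm{End}_{\mathbb Q}(J_C)$, and $e\cdot J_C$ (defined up to isogeny) is $f^*E$; the point is that for a \emph{Galois} cover the image $f^*E$ is not just isogenous to but equal to the identity component of $(J_C)^N$ intersected appropriately — this is where a non-Galois $h$ could fail. So if $h^*F\supsetneq f^*E$ with $h^*F$ also elliptic, then $\phi^*$ exhibits $f^*E$ as a proper subgroup of finite index in an elliptic subvariety of $J_C$; taking complementary abelian subvarieties with respect to the theta polarization (as in Proposition 1.3-style arguments used elsewhere in the paper) and chasing the polarization type forces $\deg\phi=1$.

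\medskip

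\noindent\textbf{Main obstacle.} The subtle point is the interaction between "$h$ is a quotient morphism" and "$h$ need not be Galois": the clean idempotent/averaging description of $f^*E$ uses that $f$ is Galois, but the factoring curve $F$ carries no such structure, so I cannot symmetrically average over $\mathrm{Gal}(C/F)$. I expect the real work to be in showing that any elliptic subvariety of $J_C$ containing the \emph{saturated} copy $f^*E$ must equal it — equivalently, that $f^*E$ has no "elliptic overgroup" inside $J_C$ — and the Galois hypothesis is what one must convert into a statement about the lattice $H_1(C;\mathbb Z)^N$ being primitive in $H_1(C;\mathbb Z)$, which is a standard but genuinely used fact (it holds because $H_1(C;\mathbb Z)$ is a permutation-ish $\mathbb Z[N]$-module in the relevant sense via the Chevalley–Weil / transfer argument, giving $H_1(C;\mathbb Z)^N$ a complement). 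Once primitivity of the invariant lattice is in hand, the factoring isogeny $\phi$ is forced to be an isomorphism.
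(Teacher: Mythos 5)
Your argument breaks down at its central step. Since $f=\phi\circ h$, we have $f^*=h^*\circ\phi^*$ on $\mathrm{Pic}^0$, and $\phi^*\colon \hat E\to\hat F$ is an isogeny, hence \emph{surjective}; therefore $f^*E$ and $h^*F$ are the \emph{same} elliptic subvariety of $J_C$. There is no containment ``$f^*E\subsetneq h^*F$ with index $d$'' and no extra torsion to be found in the image: non-minimality is invisible at the level of image subvarieties and is recorded instead in the kernel of $f^*$ (equivalently, in the cokernel of $f_*\colon H_1(C,\Z)\to H_1(E,\Z)$, equivalently in the failure of $f^*H^1(E,\Z)$ to be saturated). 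Your proposed repair does not close this gap: saturation of the fixed lattice $H_1(C;\Z)^N$ in $H_1(C;\Z)$ is automatic and trivial (if $nx$ is $N$-fixed then so is $x$, since $H_1$ is torsion-free) and needs no Chevalley--Weil input; what you actually need is the \emph{integral} equality $f^*H_1(E,\Z)=H_1(C,\Z)^N$, and the averaging/transfer argument only gives this after tensoring with $\Q$. The finite index of $f^*H_1(E,\Z)$ inside its saturation is precisely the degree of the maximal isogeny through which $f$ factors, so assuming that index is $1$ is assuming the lemma. (A side remark: your worry that $h$ might not be Galois is misplaced --- $k(C)/k(F)$ is automatically Galois as the top layer of a Galois extension; it is $k(F)/k(E)$ that could a priori fail to be, and it does not because isogenies of elliptic curves are Galois.)

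The paper argues differently: writing $F=C/H$ with $H\lhd G$ (normality coming from the fact that $\phi$ is Galois), it notes that $G/H\cong\ker\phi$ acts on $F$ by deck transformations of $\phi$, that $h^*\colon J_F\to J_C$ is $G$-equivariant with image $h^*F=f^*E\subset J_C^G$, and concludes that $G/H$ acts trivially on $F$, forcing $\deg\phi=1$. You should compare your route with this one, but also scrutinize the same delicate point in both: the deck transformations of an isogeny are \emph{translations} of $F$, and translations act trivially on $\mathrm{Pic}^0(F)$, so triviality of the induced action on $h^*F\subset J_C$ does not by itself pin down the action on the curve $F$. In fact one can construct a genus-$3$ Galois cover $C\to E$ of degree $4$ factoring through a degree-$2$ isogeny $\phi\colon F\to E$ (take $y\in F[2]$ nonzero with $E=F/\langle y\rangle$, a $t_y$-invariant reduced degree-$4$ divisor $D$ on $F$, and the double cover of $F$ branched along $D$, to which $t_y$ lifts); for such a cover $f^*H^1(E,\Z)$ has index $2$ in its saturation, so the primitivity you are after genuinely fails for Galois covers in general. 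This, and not the lattice-theoretic packaging, is the real obstacle; any complete proof must bring in more than the Galois hypothesis alone.
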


\begin{proof}
Consider the commutative diagram \eqref{eq diag minimal}. Since $f$ is Galois, $F=C/H$ for some subgroup
$H\subseteq G$. Since the cover $F\to E$ is an unramified cover of elliptic curves, it is also Galois, and thus $H$ is normal in $G$. This implies that $h:C\to F$ is $G$-equivariant, and hence so is the morphism $h^*:F\to J_C$. Since $h^*F=f^*E\subset J_C^G$, $G$ acts trivially on $F$, and thus $H=G$ and $F=E$.
\end{proof}

We are now ready to prove Theorem \ref{thm jacob intro}.

\begin{proof}[Proof of Theorem \ref{thm jacob intro}]
It is obvious that the quotient $J_C/G$ is smooth if $g\leq 1$. Let us prove first then that either (2) or (3) implies that $J_C/G$ is smooth. Let $g'$ be the genus of $C':=C/G$ and let $R$ be the total ramification index of the covering $C\to C'$. We see then by Riemann-Hurwitz that $g'=1$ in case (2) and $g'=2$ in case (3), so $g'=g-1$ in both cases.

We get then that $J_{C'}$ is isogenous to an abelian subvariety $A_0$ of $J_C$ of dimension $g'=g-1$, which corresponds to the connected component of $J_C^G$ that contains 0. Let $P_G$ be the complementary abelian subvariety of $A_0$ with respect to the theta divisor of $J_C$, which has dimension 1. Since the action of $G=\Z/2\Z$ on $A_0$ is trivial and the theta divisor is $G$-invariant, we see that $G$ acts non-trivially on $P_G$ and thus $P_G /G\cong\bb P^1$ is smooth. Then by Proposition \ref{prop And smooth} we have that $J_C/G$ is smooth.\\

Assume now that $J_C/G$ is smooth with $g\geq 2$. We have to prove that $G\cong\mathbb{Z}/2\Z$ and that either (2) or (3) holds. The smoothness of $J_C/G$ at the image of $0$ tells us that $G$ is generated by pseudoreflections (i.e.~elements fixing pointwise a divisor passing through $0$) by the Chevalley-Shephard-Todd Theorem. Consider then a pseudoreflection $\sigma\in G$ and the subgroup $S\subset G$ generated by it. Then $J_C^S$ is a divisor and hence $J_{C/S}$ has dimension $g-1$. This implies that $C/S$ is a curve of genus $g-1$. A quick look at the Riemann-Hurwitz formula using $|S|\geq 2$ and $R\geq 0$ tells us that $g\leq 3$. We are left then with five possible cases:
\[(g,g')\in \{(3,2),(3,1),(3,0),(2,1),(2,0)\}.\]

In the case $(3,2)$, we obtain $|G|=2$ and $R=0$, which corresponds to case (3). In the case $(2,1)$, by \cite[Thm.~4.1]{Br} we have $|G|=2$ and the Riemann-Hurwitz formula yields $R=2$, which corresponds to case (2). We are left to prove then that the three other cases cannot give smooth quotients.\\

By Theorem \ref{thm irred of Theta}, we see that the pair $(J_C,\Theta_C)$ is standard since the theta divisor is an irreducible principal polarization. In particular $J_C$ is isogenous to a product $X\times Y$ with $Y$ a \emph{non-trivial} $G$-invariant abelian subvariety of $J_C$ (which \textit{a fortiori} corresponds to $f^*J_{C'}$) and $X$ a direct product of elliptic curves with $\dim(X^G)=0$ (which \textit{a fortiori} corresponds to the Prym subvariety of $J_C$ with respect to $G$). This discards immediately the cases $(2,0)$ and $(3,0)$, since then we have $\dim(J_C^G)=0$.

We are left then with the case $(3,1)$, where $X$ has dimension 2 and hence $G$ must be isomorphic to either $(\Z/2\Z)^2$ or $S_3$ by the standard construction. Here, $C'$ is an elliptic curve $E$ and thus $J_E=E$. Lemma \ref{lem minimality} tells us that the cover $f:C\to C'=E$ is minimal.

Assume that $G\cong S_3$, let $H$ denote the index 2 normal subgroup of $G$ and consider the quotient $C''=C/H$. Then the genus $g''$ of $C''$ must be 2 since it has to be $<3$ and if it was 1 we would contradict the minimality of $f:C\to E$. We get then that $C\to C''$ is a Galois cover with Galois group $H$ and $|H|> 2$. This contradicts the Riemann-Hurwitz formula.

Assume now that $G=(\Z/2\Z)^2$. Since $f$ is minimal and of degree $|G|$, \cite[Cor.~12.1.4]{BL} tells us that
\[|f^*E\cap X|=|G|^2=16.\]
But since the action of $G$ on $E$ is trivial and there exists $g\in G$ acting as $-1$ on $X$, we know that $f^*E\cap X\subset X^G\subset X[2]$ and hence $f^*E\cap X$ is 2-torsion. Since $f^*E$ is just an elliptic curve, $|f^*E\cap X|\leq |(f^*E)[2]|=4$, which yields a contradiction.
\end{proof}

Theorem \ref{thm jacob intro} immediately gives us the following interesting corollary.

\begin{corollary}
Let $g_1,\ldots,g_r,n\in\mathbb{Z}_{\geq0}$ and let $z$ be the number of $i$'s such that $g_i=0$. If $n+z+\sum_{i=1}^rg_i\geq4$, then the image of $\Phi_{g_1,\ldots,g_r,n}$ is disjoint from the Jacobian locus.
\end{corollary}

\begin{rem}
Using well-known results by Broughton \cite{Br}, we see that the moduli space of genus 3 \'etale double covers of genus 2 curves is a connected 3-dimensional subvariety of $\mathcal{M}_3$, and  the moduli space of genus 2 double covers of elliptic curves is a connected 2-dimensional subvariety of $\mathcal{M}_2$.
\end{rem}

\end{document}